\newtheorem{theorem}{Theorem}[section]
\theoremstyle{remark}
\begin{document}

\title{\bf Bivariate densities in Bayes spaces: orthogonal decomposition and spline representation}

\author{Karel Hron$^{1}$, Jitka Machalov\'a$^{1}$ and Alessandra Menafoglio$^{2*}$ \\\\
    \small $^1$Department of Mathematical Analysis and Applications of Mathematics,\\
    \small Faculty of Science, Palack\'y University, Olomouc, Czech Republic,\\
    \small $^2$MOX, Department of Mathematics, Politecnico di Milano, Milano, Italy\\
    { \small\tt $^*$alessandra.menafoglio@polimi.it}\\
}

\date{}

\maketitle


\begin{abstract}
A new orthogonal decomposition for bivariate probability densities
embedded in Bayes Hilbert spaces is derived. It allows one to represent a
density into independent and interactive parts, the former being built
as the product of revised definitions of marginal densities and the
latter capturing the dependence between the two random variables being
studied. The developed framework opens new perspectives for dependence
modelling (which is commonly performed through copulas),
and allows for the analysis of dataset of bivariate densities, in
a Functional Data Analysis perspective. A spline representation
for bivariate densities is also proposed, providing a computational
cornerstone for the developed theory.
\end{abstract}

\textbf{Keywords:} Compositional data \and Functional data \and Tensor product splines \and Anthropometric data.



\section{Introduction}


The analysis of distributional data is gaining an increasing interest in the applied sciences. Distributional data, such as probability density functions (PDFs) or cumulative distribution functions, are routinely collected in social sciences (e.g., population pyramids \citep{delicado11,hron16} and geosciences (e.g., particle-size distributions \citep{menafoglio14,menafoglio16}). Analyses of distributional data based on methods designed for functional data in $L^2$ often lead to inappropriate results, such as negative predictions \citep{menafoglio18,Tal}.

It is now widely recognized that an appropriate statistical analysis of PDF data should be precisely based on their characterizing properties (e.g., \citep{Nerini07,Petersen16,hron16,menafoglio14}). In the literature, several approaches have been proposed to serve the purpose of analysing datasets of PDFs. Most works propose to analyse PDF data through a prior data transformation. For instance, \citep{delicado11} considers a transformation approach to the principal component analysis of a dataset of PDFs. \citep{srivastava07} use a square root transformations of densities to deal with a time-warping function in registration. \citep{Petersen16} propose a set of transformations that map the PDF data to a Hilbert space, where further statistical analyses are possible; this setting allows for, e.g., principal component analysis, classification, regression. A relatively large body of recent literature proposes the use of the Wasserstein metric to define a notion of distance for density data. Such metric has appealing interpretations, being related to the problem of optimal transport. However, it defines a non-linear space (Riemannian manifold), thus requiring the development of {\em ad hoc} methods for this setting (e.g., \citep{Bigot19,Panaretos19,Petersen19}), based on Frech\'{e}t statistics.
A different approach is that relying on the theory of Bayes linear spaces, that represent a generalization to the infinite-dimensional setting of the Compositional Data Analysis (CoDa, \citep{pawlowsky15}) approach. In this setting, PDFs are considered as infinite-dimensional objects that provide relative information \cite{boogaart10,boogaart14}. Bayes Hilbert spaces were built as to represent the so-called {\em principles} of CoDa (i.e., scale invariance, relative scale, sub-compositional coherence, see \citep{pawlowsky15}), through a Hilbert geometry for PDFs. The Hilbert structure of the space allows one to develop most methods of functional data analysis, while accounting for the peculiar nature of PDFs. These include principal component analysis \cite{hron16}, functional regression \cite{Tal}, spatial prediction \cite{menafoglio14}, profile monitoring \cite{menafoglio18}, time-series analysis \cite{kokoszka19,Seo19}.
Even though the statistical literature is nowadays well-developed for distributional data, little attention has been paid so far to the setting of multivariate densities, whose study is of paramount importance in the applications. A first contribution in direction of bivariate densities was recently provided in the preprint by \citep{iacopini19}, which uses the theory of Bayes Hilbert spaces over bivariate domains to study the temporal dynamic of coupled time series, modelled through copulas. As a key element of innovation with respect to previous literature, the present work proposes a novel statistical framework for bivariate PDFs, that allows studying the dependence between the target random variables, grounding on the geometry of the Bayes space. We provide new meaningful notions of compositional marginals (so-called \textit{geometric marginals}), which play the roles of the marginal distributions, consistent with the Bayes geometry. We further derive an orthogonal decomposition of bivariate PDFs in terms of independence and interaction parts, generalizing the well-known results developed in the discrete case (i.e., for compositional tables, \citep{egozcue08,egozcue15}). To allow for explicit computations of the marginals and of the latter representation, we develop a novel $B$-spline representation for bivariate PDFs, compatible with the compositional nature of the data.

The methodological results of our work shed light on the structure of multivariate Bayes spaces, suggesting a direction to create connections between the theory of Bayes spaces and the theory of copulas \cite{nelsen06}, which are widely used to build multivariate PDFs from marginals. Note that the theory of copulas is well-established and allows one to describe the joint distribution function of two random objects under very general assumptions. Our work is mostly focused on density functions (PDFs) instead, entailing a difference in the approaches in terms of (i) the assumptions made on the distribution at hand and (ii) the theoretical properties of the object being studied. In this sense, this work presents the initial steps in the direction of a new framework for dependence modeling, for which the extension to more general distributions (e.g., not absolutely continuous) is foreseen. On the other hand, this work is primarily aimed to build an analytical framework for datasets made of multivariate distributional objects, within the context of Functional Data Analysis (FDA, \citep{ramsay05}). In this view, building the dependence modelling on PDFs might be preferable, and this would be consistent with the usual practice in FDA, where regularity assumptions (continuity, boundedness, squared-integrability) are typically made on data. In this context, the appealing properties of the Bayes space approach which are discussed in this work (resulting, e.g., from the \emph{orthogonal} decomposition of the bivariate density into independent and interactive parts), are seen as key factors potentially fostering the development and interpretation of new FDA methods for multivariate distributional observations. In this sense, the methodology presented in the paper is going to offer an alternative viewpoint to the standard copula theory by providing an orthogonal decomposition of bivariate PDFs, while opening a novel frontier to analyse samples of bivariate densities using methods of FDA.

The remaining part of this work is organized as follows. In Section \ref{sec:Bayes}, the Bayes space methodology is recalled from \citep{boogaart14}, with particular reference to bivariate densities. This enables us to develop an orthogonal decomposition of bivariate densities into independent and interactive parts, thoroughly discussed in Section
\ref{sec:decomp} and demonstrated with simulated truncated Gaussian densities in Section \ref{sec:sim}. In Section \ref{sec:splines}, a spline representation for bivariate densities mapped in the $L^2$ space is introduced; such representation is relevant to allow processing raw data, and to develop efficient computational methods. In Section \ref{sec:anthropometric} the theoretical framework is applied to a time series of bivariate densities coming from an anthropometric cross-sectional study. The final Section \ref{sec:conc} concludes with some overview comments and further perspective.


\section{PDFs as elements of a Bayes space}\label{sec:Bayes}


Bayes spaces are designed to provide a geometrical representation for density functions characterized by the property of scale invariance \citep{boogaart14}. The latter property assumes that, given a domain $\Omega$ and a positive real multiple $c$, two proportional positive functions $f(x)$ and $g(x)$ (i.e., such that $g(x)=cf(x)$, for $c>0$) carry essentially the same, relative information \cite{boogaart14}. This follows also the common strategy used in Bayesian statistics where multiplying factors are typically dropped from computations, as these are not essential to the definition of the distributions at hand. Note that the scale invariance of a density $f$ is a direct consequence of the same property of the associated measure $\mu$, i.e., of the $\sigma$-finite measure $\mu$ such that $f=d\mu/d\mathsf{P}$ for a reference measure $\mathsf{P}$. In this context, we refer to the so-called $\mathcal{B}$-equivalence of measures (and densities): two measures $\mu$ and $\nu$ are $\mathcal{B}$-equivalent if they are proportional, i.e., there exists a positive real multiple $c$ such that $\nu({\sf A})=
c\cdot\mu({\sf A})$ for any $\sf{A}\in\mathcal{A}$, $\mathcal{A}$ being a sigma-algebra on $\Omega$.

Given a $\sigma$-finite measure $\mathsf{P}$, the Bayes space $\mathcal{B}^2(\mathsf{P})$ is a space of $\mathcal{B}$-equivalence classes of $\sigma$-finite positive measures $\mu$ with square-integrable log-density w.r.t. $\mathsf{P}$, i.e.,
$$
\mathcal{B}^2(\mathsf{P}) = \left\{ \mu \in \mathcal{B}^2(\mathsf{P}) : \int \left|\ln \frac{d \mu}{d \mathsf{P}} \right|^2  d \, \mathsf{P}< +\infty\right\}.
$$
From the practical point of view, an important role is played by the reference measure $\mathsf{P}$, as thoroughly investigated in \citep{Talska2019}.
The choice of the reference measure determines a weighting of the domain $\Omega$ of the PDF, which can be used to give more relevance to certain regions of $\Omega$ when conducting FDA, according to the purpose of the analysis \cite{boogaart14,egozcue16}. Given that the weighting of the domain is not of primary interest here and one would intuitively resort simply to the Lebesgue reference measure, the discussion on $\mathsf{P}$ might seem somehow lateral to the main focus of this work. Nevertheless, as we will see already in Theorem \ref{thm:ind-amar}, the scale of $\mathsf{P}$ indeed matters for a meaningful decomposition of a bivariate density into independent and interactive parts. For this reason, we here limit to mention two key points which shall be useful in the following. First, in general, an analysis based on a reference measure $\mathsf{P}$ does not provide the same results as an analysis based on $c\mathsf{P}$, for $c>0$. Indeed, using $\mathsf{P}$ or $c\mathsf{P}$ typically leads to a difference in the scale of the result.
Second, to change the reference measure from $\lambda$ to a measure $\mathsf{P}$ with strictly positive $\lambda$-density $p = d \mathsf{P}/ d\lambda$, the well-known chain rule can be used. For a generic measure $\mu$ one has
$$
\mu(\mbox{A})= \int_{\mbox{A}}\frac{d \mu}{d \lambda} \, d \lambda = \int_{\mbox{A}} \frac{d \mu}{d \lambda} \cdot \frac{d \lambda}{d \mathsf{P}}\, d \mathsf{P} = \int_{\mbox{A}} \frac{d \mu}{d \lambda} \cdot \frac{1}{p}\, d \mathsf{P}.$$

The Bayes space, as described above, can also be defined for the case when the domain $\Omega$ is a Cartesian product of two domains $\Omega_X$ and $\Omega_Y$, i.e., $\Omega=\Omega_X\times\Omega_Y$. In this case, the reference measure $\mathsf{P}$ can be decomposed as a product measure $\mathsf{P}=\mathsf{P}_X\times\mathsf{P}_Y$ and the Hilbert space structure of the Bayes space $\mathcal{B}^2(\mathsf{P})$ \cite{boogaart14,egozcue16} can be built accordingly. In this case, the operations of \textit{perturbation} and
\textit{powering} can be defined for any two bivariate densities $f,\,g$ with respect to $\mathsf{P}$, i.e., $f,\,g\in\mathcal{B}^2(\mathsf{P})$, and a real constant $\alpha$ as
\begin{equation*}
(f \oplus g)(x,y) =_{\mathcal{B}^2(\mathtt{P})} f(x,y) \cdot g(x,y) \quad \text{and}
\quad (\alpha \odot f)(x,y) =_{\mathcal{B}^2(\mathtt{P})} f(x,y)^{\alpha},
\end{equation*}
respectively. The lower index in $=_{\mathcal{B}^2(\mathtt{P})}$ means that the right hand side of the equations can be arbitrarily rescaled without altering the relative information that the resulting density in $\mathcal{B}^2(\mathsf{P})$ contains. The Hilbert space structure is completed by defining the inner product,
\begin{equation} \label{eq: inner}
\begin{split}
\left\langle f, g\right\rangle_{\mathcal{B}^2(\mathsf{P})} & = \frac{1}{2\mathtt{P}(\Omega)} \iint_\Omega \iint_\Omega \ln \frac{f(x,y)}{f(s,t)} \ln \frac{g(x,y)}{g(s,t)} \, d \mathtt{P}(x,y)\,d \mathtt{P}(s,t) =\\
& = \frac{1}{2\mathtt{P}(\Omega)} \int\limits_{\Omega_X} \int\limits_{\Omega_Y} \int\limits_{\Omega_X} \int\limits_{\Omega_Y} \ln \frac{f(x,y)}{f(s,t)} \ln \frac{g(x,y)}{g(s,t)} \,
d \mathtt{P}_X(x)\,d \mathtt{P}_Y(y)\,d \mathsf{P}_X(s)\,d \mathsf{P}_Y(t),
\end{split}
\end{equation}
which implies in the usual way also the norm and the distance,
\begin{equation} \label{eq:norm}
||f||_{\mathcal{B}^2(\mathsf{P})}=\sqrt{\langle f, f\rangle_{\mathcal{B}^2(\mathsf{P})}},\quad d_{\mathcal{B}^2(\mathsf{P})}(f,g)=||f\ominus g||_{\mathcal{B}^2(\mathsf{P})},
\end{equation}
where $f\ominus g=f\oplus[(-1)\odot g]$ is the perturbation-subtraction of densities.
Here, the definition of the inner product \eqref{eq: inner} is presented according to \citep{egozcue16}. While the scale of the reference measure $\mathsf{P}$ does not have any impact for the operations of perturbation and powering, it does influence the inner product because the scale corresponds to shrinkage (or expansion) of the Bayes space (for details, see \citep{Talska2019}).

The usual strategy when dealing with the Bayes spaces \cite{boogaart14,menafoglio14,hron16} is not to process densities directly in the original space but to map them into the standard $L^2$ space where most of the widely-used methods of functional data analysis (FDA, \citep{ramsay05}) can be employed. The \textit{clr transformation} of a bivariate density $f(x,y)\in\mathcal{B}^2(\mathsf{P})$ is a real function $f^c:\Omega\rightarrow \mathbb{R}$, $f^c\in L_0^2(\mathsf{P})$, defined -- using Fubini's theorem -- as
\begin{eqnarray}
\label{biclr}
f^c(x,y)&=&\ln f(x,y)-\frac{1}{\mathsf{P}(\Omega)}\iint_{\Omega}\ln f(x,y)\, d\mathsf{P}=\\
\nonumber &=& \ln f(x,y)-\frac{1}{\mathsf{P}(\Omega)}\int_{\Omega_X}\int_{\Omega_Y}\ln f(x,y)\,d\mathsf{P}_X d\mathsf{P}_Y.
\end{eqnarray}
Similarly as for perturbation and powering, the scale of $\mathsf{P}$ does not play any role in \eqref{biclr}, too. On the other hand, one should note that the resulting function $f^c$ is expressed with respect to reference $\mathsf{P}$. As a consequence, using any measure other than the Lebesgue $\lambda$ leads to clr-transformations defined over a weighted $L^2$ space $L^2(\mathsf{P})$ \cite{egozcue16}. Moreover, one should also take into account the zero-integral constraint of clr transformed densities, i.e.,
\begin{equation}
\label{zeroint}
\int_{\Omega_X}\int_{\Omega_Y}f^c(x,y)\,d\mathsf{P}_X d\mathsf{P}_Y=0.
\end{equation}
In the following, we shall indicate by $L_0^2(\mathsf{P})$ the subspace of the $L^2(\mathsf{P})$ space of (equivalence classes of) functions having zero integral; in particular, one clearly has that $f^c(x,y) \in L_0^2(\mathsf{P})$. Nevertheless, previous works focused on the univariate case demonstrate that this constraint usually does not represent any serious obstacle for the application of FDA methods, especially if a proper spline representation of the densities is used \cite{hron16,Mach,Tal}. Since a reliable and flexible spline representation forms a cornerstone in a large number of computational methods for FDA \cite{ramsay05}, we shall pay special attention in developing a bivariate $B$-splines basis suited to represent clr transformation of bivariate densities in Section \ref{sec:splines}.


\section{Decomposition of bivariate densities}\label{sec:decomp}


One of the key goals in probability theory is to study dependence structure between two random variables. A systematic approach to the analysis of dependence structure is represented by the theory of copulas \cite{nelsen06}, firstly introduced by Sklar \cite{sklar59}. The well-known Sklar's theorem provides a decomposition of any PDF into its interactive and independent parts, the latter being built as the product of the respective marginal PDFs. Relying on the Bayes space methodology allows one to provide a similar decomposition which is now, however, orthogonal. This important property enables for an elegant geometrical representation of the decomposition, and for a powerful probabilistic interpretation if a normalized reference measure is used, with direct consequences from the statistical viewpoint. For example, the proposed decomposition allows one to derive a measure of dependence called \textit{simplicial deviance}, defined as the squared norm of the density expressing (solely) relationships between both variables (factors).

The orthogonal decomposition of bivariate densities grounds on a novel definition of marginals, named \emph{geometric marginals}, which are built upon marginalizing the bivariate clr transformation as follows. Given $x\in\Omega_X$ and $y\in\Omega_Y$, we define the \emph{clr marginals} as
\begin{equation}
\label{xclrmar}
f^c_X(x)=\int\limits_{\Omega_Y}f^c(x,y)\,d\mathsf{P}_Y=\int\limits_{\Omega_Y}\ln f(x,y)\,d\mathsf{P}_Y-\frac{\mathsf{P}_X(\Omega_X)}{\mathsf{P}(\Omega)}\int\limits_{\Omega_X}\int\limits_{\Omega_Y}\ln f(x,y)\,d\mathsf{P}_X d\mathsf{P}_Y
\end{equation}
and
\begin{equation}
\label{yclrmar}
f^c_Y(y)=\int\limits_{\Omega_X}f^c(x,y)\,d\mathsf{P}_X=\int\limits_{\Omega_X}\ln f(x,y)\,d\mathsf{P}_X-\frac{\mathsf{P}_Y(\Omega_Y)}{\mathsf{P}(\Omega)}\int\limits_{\Omega_X}\int\limits_{\Omega_Y}\ln f(x,y)\,d\mathsf{P}_X d\mathsf{P}_Y,
\end{equation}
respectively. It is easily seen that $f_X^c\in L_0^2(\Omega_X)$ and $f_Y^c\in L_0^2(\Omega_Y)$, where $L_0^2(\Omega_i)$ stands for the subspace of $L^2(\Omega_i)$ whose elements integrate to zero. We define the \textit{geometric marginals} $f_X\in\mathcal{B}^2(\Omega_X)$ and $f_Y\in\mathcal{B}^2(\Omega_Y)$ as the elements of $B^2(\Omega_X)$ and $B^2(\Omega_Y)$ associated with the clr-marginals $f^c_X$, $f^c_Y$, respectively, i.e.,
\begin{eqnarray}
\label{gmarg}
f_X(x)&=_{\mathcal{B}(\mathsf{P}_X)}&\exp\left\{f^c_X(x)\right\}=_{\mathcal{B}(\mathsf{P}_X)} \exp\left\{\int_{\Omega_Y}\ln f(x,y)\,d\mathsf{P}_Y\right\},\\
\nonumber f_Y(y)&=_{\mathcal{B}(\mathsf{P}_Y)}&\exp\left\{f^c_Y(y)\right\}=_{\mathcal{B}(\mathsf{P}_Y)} \exp\left\{\int_{\Omega_X}\ln f(x,y)\,d\mathsf{P}_X\right\}.
\end{eqnarray}
In the following, the terms \emph{marginal}, \emph{$X$-marginal} and \emph{$Y$-marginal} will always refer to the geometric notion of marginals given in \eqref{gmarg}.

In probability theory, independence of random variables corresponds to the possibility of expressing a joint density as a product of its \emph{marginals}. In a setting where the latter are defined as the geometric marginals \eqref{gmarg}, the \textit{independent} and \textit{interactive parts} of $f(x,y)\in\mathcal{B}^2(\mathsf{P})$ can be defined, respectively, as
\begin{equation}
\label{indep}
f_{\mathrm{ind}}(x,y)=f_X(x)f_Y(y),\ (x,y)\in\Omega
\end{equation}
and
\begin{equation}
\label{inter}
f_{\mathrm{int}}(x,y)=\frac{f(x,y)}{f_X(x)f_Y(x)}=f(x,y)\ominus f_{\mathrm{ind}}(x,y),
\end{equation}
where $f_X(x)$ and $f_Y(y)$ are the geometrical marginals defined above.
The first and foremost important property of the proposed decomposition
\begin{equation}
\label{ddecomp}
f(x,y)=f_{\mathrm{ind}}(x,y)\oplus f_{\mathrm{int}}(x,y)
\end{equation}
for a bivariate density $f(x,y)$ is the \textit{orthogonality} its parts. In the following, the geometrical marginals will be formally taken as bivariate functions, i.e. $f_X(x)\equiv f_X(x,y)$ and $f_Y(y)\equiv f_Y(x,y)$, and considered as elements of $\mathcal{B}^2(\mathsf{P})$; similarly for their clr counterparts. This enables, among others, to express the independence density $f_{\mathrm{ind}}$ as sum (perturbation) of the geometric marginals, i.e.,
\begin{equation}
\label{inddecomp}
f_{\mathrm{ind}}(x,y)=f_X(x,y)\oplus f_Y(x,y).
\end{equation}

\begin{theorem}\label{thm:ortho}
For the independent and interactive parts of a bivariate density $f(x,y)$, it holds that\\
{\bf (i)} $\langle f_{\mathrm{ind}},f_{\mathrm{int}}\rangle_{\mathcal{B}^2(\mathsf{P})}=0$, or, equivalently that\\
{\bf (ii)}  $\langle f^c_{\mathrm{ind}},f^c_{\mathrm{int}}\rangle_{L_0^2(\mathsf{P})}=0$.
\end{theorem}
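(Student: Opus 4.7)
The plan is to establish (ii) by direct computation and to derive (i) from it, exploiting the fact that the clr transformation defines an isometric isomorphism from $\mathcal{B}^2(\mathsf{P})$ onto $L_0^2(\mathsf{P})$. The isometry can be verified by unpacking \eqref{eq: inner}: writing $\ln(f(x,y)/f(s,t)) = f^c(x,y)-f^c(s,t)$, expanding the product, and using the zero-integral constraint \eqref{zeroint} to kill the four terms that depend on the dummy variables $(s,t)$ alone, one obtains $\langle f,g\rangle_{\mathcal{B}^2(\mathsf{P})} = \langle f^c,g^c\rangle_{L_0^2(\mathsf{P})}$. Since the isometry sends $\oplus$ to addition and $\ominus$ to subtraction, (i) and (ii) are equivalent.

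For (ii), I first identify the two clr parts explicitly. By linearity of clr and \eqref{inddecomp}, I have $f^c_{\mathrm{ind}}(x,y) = f^c_X(x) + f^c_Y(y)$, with each univariate clr marginal regarded as a bivariate function constant in the remaining coordinate; a short check using the chain rule for the reference measure confirms that the bivariate clr of the trivial extension of $f_X$ agrees with its univariate clr, so this identification is consistent. Consequently $f^c_{\mathrm{int}}(x,y) = f^c(x,y) - f^c_X(x) - f^c_Y(y)$.

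Then I expand $\iint (f^c_X + f^c_Y)(f^c - f^c_X - f^c_Y)\,d\mathsf{P}$ into six terms and reduce each of them by Fubini's theorem, using just two families of identities: the definitions \eqref{xclrmar}--\eqref{yclrmar}, which say that integrating $f^c$ in one coordinate returns the corresponding clr marginal, and the marginalised version of \eqref{zeroint}, which gives $\int_{\Omega_X} f^c_X\,d\mathsf{P}_X = \int_{\Omega_Y} f^c_Y\,d\mathsf{P}_Y = 0$. The purely mixed term $\iint f^c_X(x)f^c_Y(y)\,d\mathsf{P}$ factorises and vanishes; the remaining four pair up, with $\iint f^c_X(x)f^c(x,y)\,d\mathsf{P}$ cancelling $\iint f^c_X(x)^2\,d\mathsf{P}$, and symmetrically in $y$.

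The delicate point I expect to handle carefully is the bookkeeping of the constants $\mathsf{P}_X(\Omega_X)$ and $\mathsf{P}_Y(\Omega_Y)$ that arise when integrands depending on a single coordinate are integrated against the product measure $\mathsf{P}$. This is exactly the issue foreshadowed in Section~\ref{sec:Bayes} (``the scale of $\mathsf{P}$ indeed matters''); the final pairing cancels cleanly provided $\mathsf{P}_X$ and $\mathsf{P}_Y$ are normalised to be probability measures, which is also the setting in which the decomposition admits a genuine probabilistic interpretation.
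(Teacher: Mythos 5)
Your proposal is correct and follows essentially the same route as the paper's own proof: identify $f^c_{\mathrm{ind}}=f^c_X+f^c_Y$ and $f^c_{\mathrm{int}}=f^c-f^c_X-f^c_Y$, expand the $L^2_0(\mathsf{P})$ inner product, and kill the terms via Fubini using $\int_{\Omega_Y}f^c\,d\mathsf{P}_Y=f^c_X$ and the zero-integral property of the clr marginals, with the clr isometry giving the equivalence of (i) and (ii). Your explicit flagging of the constants $\mathsf{P}_X(\Omega_X)$, $\mathsf{P}_Y(\Omega_Y)$ is in fact \emph{more} careful than the paper, which silently equates $\int_{\Omega_X}[f^c_X(x)]^2\,d\mathsf{P}_X$ with $\|f^c_X\|^2_{L_0^2(\mathsf{P})}$ even though the latter, for $f^c_X$ viewed as a bivariate function, carries an extra factor $\mathsf{P}_Y(\Omega_Y)$; the cancellation $\langle f^c,f^c_X\rangle=\|f^c_X\|^2$ therefore does require the normalization you impose, consistent with the paper's own Gaussian example where the Lebesgue (non-normalized) reference yields a nonzero interaction part even under independence.
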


The proof of Theorem \ref{thm:ortho} -- as well as those of the following theorems -- is reported in Supplementary Material. Note that, from the orthogonality of the decomposition $f=f_{\mathrm{ind}}\oplus f_{\mathrm{int}}$, the Pythagorean theorem follows directly, i.e.,
$||f||_{\mathcal{B}^2(\mathsf{P})}^2=||f_{\mathrm{ind}}||_{\mathcal{B}^2(\mathsf{P})}^2+||f_{\mathrm{int}}||_{\mathcal{B}^2(\mathsf{P})}^2$.

A further important property of independence densities $f_{\mathrm{ind}}$ is the following. Call \textit{arithmetic marginals} the usual marginal distributions (a similar notation being used in the discrete case of compositional tables \cite{egozcue15})
$$f_{X,a}(x) = \int_{\Omega_Y} f (x,y) d\mathsf{P}_Y, \quad f_{Y,a}(y) = \int_{\Omega_X} f(x,y) d\mathsf{P}_X. $$
It is clear that, if the theory were built on arithmetic marginals, the above decompositions (\ref{ddecomp}) and (\ref{inddecomp}) together with the statement of Theorem \ref{thm:ortho} would not be achieved. On the other hand, there is an interesting link between the two types of marginals (geometric or arithmetic) when the independent part is concerned. Indeed, the following result states that, whenever the random variables $X,Y$ are independent, the bivariate PDF coincides with its independent part defined in \eqref{indep}. In this case, if the reference measure is a probability measure (i.e., it is normalized), the arithmetic and the geometric marginals coincide.

\begin{theorem}\label{thm:ind-amar}
Let $f$ be an independence density and let the reference measure $\mathsf{P}=\mathsf{P}_X\times\mathsf{P}_Y$ be the product measure of probability measures $\mathsf{P}_X,\,\mathsf{P}_Y$. Then the arithmetic and geometric marginals of $f$ coincide.
\end{theorem}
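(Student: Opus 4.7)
The plan is to show that, under the hypothesis that $\mathsf{P}_X$ and $\mathsf{P}_Y$ are probability measures, the arithmetic marginal of an independence density agrees pointwise with the canonical probability-density representative of the geometric marginal. The key starting observation is that ``$f$ is an independence density'' means $f =_{\mathcal{B}^2(\mathsf{P})} f_X \oplus f_Y$ by \eqref{indep}--\eqref{inddecomp}, so the Bayes equivalence classes of $f$ and of the product $f_X(x)f_Y(y)$ coincide. Consequently, for any positive representatives $\tilde{f}_X,\tilde{f}_Y$ of the geometric marginals there exists a constant $c>0$ with $f(x,y)=c\,\tilde{f}_X(x)\,\tilde{f}_Y(y)$ as functions on $\Omega$.

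Next I would use the probability-measure hypothesis to pick canonical representatives. Since $\mathsf{P}_X,\mathsf{P}_Y$ are probability measures, every positive integrable representative of a Bayes class can be uniquely rescaled to have unit integral; let $\hat{f}_X,\hat{f}_Y$ denote the resulting probability-density representatives of $f_X,f_Y$. Setting $\hat{f}(x,y):=\hat{f}_X(x)\hat{f}_Y(y)$, the product structure $\mathsf{P}=\mathsf{P}_X\times\mathsf{P}_Y$ and Fubini's theorem give $\int_\Omega \hat{f}\,d\mathsf{P}=1$, so $\hat{f}$ is precisely the probability-density representative of the Bayes class of $f$.

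The theorem then reduces to a one-line computation via Fubini:
$$
f_{X,a}(x) \;=\; \int_{\Omega_Y} \hat{f}(x,y)\,d\mathsf{P}_Y(y) \;=\; \hat{f}_X(x)\int_{\Omega_Y} \hat{f}_Y(y)\,d\mathsf{P}_Y(y) \;=\; \hat{f}_X(x),
$$
and symmetrically $f_{Y,a}(y)=\hat{f}_Y(y)$, which is the desired identification of the arithmetic with the geometric marginals.

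The main subtlety, more than a genuine obstacle, is locating where the probability-measure hypothesis enters. Without it, the same reasoning would still yield $f_{X,a}=_{\mathcal{B}^2(\mathsf{P}_X)} f_X$, i.e.\ equality up to a multiplicative constant (the constant being $c\cdot \int \tilde f_Y\,d\mathsf P_Y$), but nothing more. The assumption that $\mathsf{P}_X,\mathsf{P}_Y$ are probability measures is precisely what allows one to pin down a common canonical (unit-integral) representative on both sides, so that the two marginals coincide as genuine functions rather than as mere elements of the Bayes quotient space.
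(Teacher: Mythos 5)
Your proof is correct, and it reaches the same conclusion by a recognizably different route from the paper's. The paper works entirely in the clr domain: it reads ``independence density'' as $f(x,y)=f_{X,a}(x)f_{Y,a}(y)$, writes $f^c=f^c_{X,a}+f^c_{Y,a}$, integrates this over $\Omega_Y$ using definition \eqref{xclrmar}, and obtains $\mathsf{P}_Y(\Omega_Y)\,f^c_{X,a}(x)$ plus a vanishing zero-integral term, so that normalization ($\mathsf{P}_Y(\Omega_Y)=1$) makes the geometric marginal's clr equal to the arithmetic one's. You instead read the hypothesis as $f=_{\mathcal{B}^2(\mathsf{P})}f_X\oplus f_Y$ (product of \emph{geometric} marginals, which is arguably the more literal reading of \eqref{indep}--\eqref{inddecomp}), work multiplicatively with unit-integral representatives, and use Fubini to identify the arithmetic marginal of the normalized product with the normalized geometric marginal. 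The two arguments are exponential/logarithmic mirror images of each other and the normalization hypothesis enters at the analogous spot in each; your version has the merit of making explicit that the ``arithmetic marginal'' depends on which representative of the Bayes class of $f$ one integrates, a point the paper glosses over. One caveat on your closing aside: without the normalization, the claim that one still gets $f_{X,a}=_{\mathcal{B}^2(\mathsf{P}_X)}f_X$ is delicate, because marginalizing the clr of $c\,\tilde f_X\tilde f_Y$ produces the \emph{power} $\tilde f_X^{\mathsf{P}_Y(\Omega_Y)}$ rather than a rescaling of $\tilde f_X$, so a product density need not even equal the product of its own geometric marginals in the unnormalized case (this is exactly the phenomenon behind the nonzero interaction part at $\rho=0$ under the Lebesgue reference in Section \ref{sec:sim}). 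This does not affect the proof of the theorem as stated.
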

As such, the independent part built through the geometric marginals enables one to fully capture the joint distribution of two random variables when these are independent.

The next theorem states the mutual orthogonality between the geometric marginals ($f_X(x,y)$ and $f_Y(x,y)$) and the \textit{interaction density}
$f_{\mathrm{int}}(x,y)$.

\begin{theorem}\label{thm:ortho-mar}
The $X$-marginal and $Y$-marginal are orthogonal with respect to the Bayes space $\mathcal{B}^2(\mathsf{P})$, i.e., $\langle f_X, f_Y \rangle_{\mathcal{B}^2(\mathsf{P})} = 0$. Moreover, the marginals are also orthogonal to the interaction density, i.e., $\langle f_X, f_{\mathrm{int}} \rangle_{\mathcal{B}^2(\mathsf{P})} = 0$ and $\langle f_Y, f_{\mathrm{int}} \rangle_{\mathcal{B}^2(\mathsf{P})} = 0$.
\end{theorem}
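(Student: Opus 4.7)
The plan is to transfer the three inner products from $\mathcal{B}^2(\mathsf{P})$ into the ordinary $L^2(\mathsf{P})$ via the clr isometry, after which each orthogonality becomes an elementary integral computation through Fubini's theorem. Before starting, I would record three preliminary facts: (a) the clr map is an isometry, i.e.\ $\langle f, g \rangle_{\mathcal{B}^2(\mathsf{P})} = \int_\Omega f^c \, g^c \, d\mathsf{P}$, which drops out of the double integral \eqref{eq: inner} upon writing $\ln[f(x,y)/f(s,t)] = f^c(x,y) - f^c(s,t)$ and invoking the zero-integral property \eqref{zeroint}; (b) the identity $f_{\mathrm{int}}^c = f^c - f_X^c - f_Y^c$, obtained by combining \eqref{ddecomp}--\eqref{inddecomp} with the fact that clr sends perturbation to addition; and (c) the zero-mean identities $\int_{\Omega_X} f_X^c \, d\mathsf{P}_X = 0 = \int_{\Omega_Y} f_Y^c \, d\mathsf{P}_Y$, obtained by integrating \eqref{xclrmar} and \eqref{yclrmar} against the remaining variable and using \eqref{zeroint}.

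For the first claim, since $f_X^c$ depends on $x$ alone and $f_Y^c$ on $y$ alone, Fubini factorises
\begin{equation*}
\langle f_X, f_Y \rangle_{\mathcal{B}^2(\mathsf{P})} = \int_\Omega f_X^c(x) \, f_Y^c(y) \, d\mathsf{P} = \Bigl(\int_{\Omega_X} f_X^c \, d\mathsf{P}_X\Bigr)\Bigl(\int_{\Omega_Y} f_Y^c \, d\mathsf{P}_Y\Bigr),
\end{equation*}
and both factors vanish by (c). For the second claim I would substitute (b) and expand to obtain
\begin{equation*}
\langle f_X, f_{\mathrm{int}} \rangle_{\mathcal{B}^2(\mathsf{P})} = \int_\Omega f_X^c(x) f^c(x,y) \, d\mathsf{P} - \int_\Omega f_X^c(x)^2 \, d\mathsf{P} - \int_\Omega f_X^c(x) f_Y^c(y) \, d\mathsf{P}.
\end{equation*}
The last integral is zero by the first claim. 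For the first integral I would apply Fubini and use the defining identity $f_X^c(x) = \int_{\Omega_Y} f^c(x,y) \, d\mathsf{P}_Y$ to collapse it to $\int_{\Omega_X} f_X^c(x)^2 \, d\mathsf{P}_X$, which cancels the middle term under the product-of-probability-measures normalisation of $\mathsf{P}$ already invoked in Theorem~\ref{thm:ind-amar}. The orthogonality $\langle f_Y, f_{\mathrm{int}} \rangle_{\mathcal{B}^2(\mathsf{P})} = 0$ follows by the symmetric computation with $X$ and $Y$ interchanged.

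The main obstacle I anticipate is not the algebra itself but the bookkeeping around the extension of the marginals: one has to verify that the $\mathsf{P}$-clr of the extended bivariate object $f_X(x,y) \equiv f_X(x)$ agrees with the univariate $f_X^c(x)$, so that the linear decomposition in (b) lives in a single copy of $L_0^2(\mathsf{P})$, and that the reference measure is normalised so that the Fubini reduction in the second claim cancels cleanly without leaving a stray $\mathsf{P}_X(\Omega_X)$ or $\mathsf{P}_Y(\Omega_Y)$ factor. Once these normalisation points are settled, each of the three orthogonality assertions reduces to two short lines.
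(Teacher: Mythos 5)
Your proposal is correct and follows essentially the same route as the paper's proof: transfer to $L_0^2(\mathsf{P})$ via the clr isometry, factorize $\langle f_X^c,f_Y^c\rangle$ by Fubini using the zero-mean property of the clr marginals, and then expand $f_{\mathrm{int}}^c=f^c-f_X^c-f_Y^c$ so that the projection identity $\langle f^c,f_X^c\rangle_{L_0^2(\mathsf{P})}=\|f_X^c\|_{L_0^2(\mathsf{P})}^2$ yields the cancellation. Your observation that this last cancellation silently requires $\mathsf{P}_X,\mathsf{P}_Y$ to be normalized is well taken --- the paper's proof uses the same identification without remarking on it.
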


The relations $\langle f^c,f_X^c\rangle_{L_0^2(\mathsf{P})}=||f_X^c||^2_{L_0^2(\mathsf{P})}$,
$\langle f^c,f_Y^c\rangle_{L_0^2(\mathsf{P})}=||f_Y^c||^2_{L_0^2(\mathsf{P})}$ and $\langle f_X^c,f_Y^c\rangle_{L_0^2(\mathsf{P})}=0$ nicely illustrate that
$X$- and $Y$- marginals of the density $f(x,y)$ represent its orthogonal projections. In addition, the Pythagorean theorem between the independence density and its projections holds, $||f_{\mathrm{ind}}||_{\mathcal{B}^2(\mathsf{P})}^2=||f_X||_{\mathcal{B}^2(\mathsf{P})}^2+||f_Y||_{\mathcal{B}^2(\mathsf{P})}^2$.

As a consequence of Theorems \ref{thm:ind-amar}-\ref{thm:ortho-mar}, one can conclude that, in case of independence, arithmetic and geometric marginals coincide, and the interaction part is null (i.e., it is the neutral element of perturbations).  More in general, the next result states that the geometric marginals are completely determined by the independent part of the bivariate density. Here, the clr marginals of $f_{\mathrm{int}}$ are defined as
\begin{eqnarray*}
f^c_{\mathrm{int},X} &=& \int_{\Omega_Y} \ln f_{\mathrm{int}}(x,y) d\mathsf{P}_Y - \frac{\mathsf{P}_Y(\Omega_Y)}{\mathsf{P}(\Omega)} \iint_{\Omega} f_{\mathrm{int}}(x,y) d\mathsf{P}, \\
f^c_{\mathrm{int},Y} &=& \int_{\Omega_X} \ln f_{\mathrm{int}}(x,y) d\mathsf{P}_X - \frac{\mathsf{P}_X(\Omega_X)}{\mathsf{P}(\Omega)} \iint_{\Omega} f_{\mathrm{int}}(x,y) d\mathsf{P},
\end{eqnarray*}
and the geometric marginals $f_{\mathrm{int},X}$,$f_{\mathrm{int},Y}$ are the associated densities in $\mathcal{B}^2(\mathsf{P})$.

\begin{theorem}\label{thm:int-neutral}
Whenever the reference measure is the product measure of probability measures $\mathsf{P}_X$, $\mathsf{P}_Y$, the geometric marginals $f_{\mathrm{int},X}$,$f_{\mathrm{int},Y}$ of the interaction part $f_{\mathrm{int}}$ coincide with the neutral element of perturbation, i.e., for any $f$ in $\mathcal{B}^2(\mathsf{P})$ one has
\begin{equation}\label{eq:thm3}
f \oplus f_{\mathrm{int},X} = f;\quad f\oplus  f_{\mathrm{int},Y} = f.\end{equation}
\end{theorem}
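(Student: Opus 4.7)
The plan is to show that $f_{\mathrm{int},X}$ is the neutral element of $(\mathcal{B}^2(\mathsf{P}),\oplus)$, from which the desired identity $f\oplus f_{\mathrm{int},X}=f$ follows for every $f\in\mathcal{B}^2(\mathsf{P})$; the case of $f_{\mathrm{int},Y}$ is symmetric. Since the clr transform is an isometric isomorphism from $\mathcal{B}^2(\mathsf{P})$ onto $L_0^2(\mathsf{P})$ that sends the neutral element to the zero function, it suffices to prove that the clr marginal $f^c_{\mathrm{int},X}$ vanishes identically on $\Omega_X$.

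At the clr level the computation is transparent. From the orthogonal decomposition $f=f_{\mathrm{ind}}\oplus f_{\mathrm{int}}$ together with $f_{\mathrm{ind}}=f_X\oplus f_Y$ (both viewed as elements of $\mathcal{B}^2(\mathsf{P})$ via the convention $f_X(x)\equiv f_X(x,y)$, $f_Y(y)\equiv f_Y(x,y)$), the linearity of clr yields the pointwise identity $f^c_{\mathrm{int}}(x,y)=f^c(x,y)-f^c_X(x)-f^c_Y(y)$. Applying the compact form of the clr marginal (the first equality in (\ref{xclrmar}), now to $f_{\mathrm{int}}$ in place of $f$), I would compute
$$f^c_{\mathrm{int},X}(x)=\int_{\Omega_Y}f^c_{\mathrm{int}}(x,y)\,d\mathsf{P}_Y = \int_{\Omega_Y}f^c(x,y)\,d\mathsf{P}_Y \;-\; f^c_X(x)\,\mathsf{P}_Y(\Omega_Y) \;-\; \int_{\Omega_Y}f^c_Y(y)\,d\mathsf{P}_Y.$$
The first term equals $f^c_X(x)$ by (\ref{xclrmar}). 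The hypothesis that $\mathsf{P}_Y$ is a probability measure gives $\mathsf{P}_Y(\Omega_Y)=1$, so the first two terms cancel. The remaining term coincides with $\iint_{\Omega}f^c(x,y)\,d\mathsf{P}$ by Fubini and the definition of $f^c_Y$, and this vanishes by the zero-integral constraint (\ref{zeroint}). Hence $f^c_{\mathrm{int},X}\equiv 0$, and the argument for $f^c_{\mathrm{int},Y}$ follows by interchanging the roles of the variables.

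The only real subtlety is bookkeeping: one must consistently treat the geometric marginals as bivariate densities (so that their clr transforms are constant in the missing variable), and must invoke the probability-measure hypothesis at the right step. Without this normalization a residual $(1-\mathsf{P}_Y(\Omega_Y))\,f^c_X(x)$ survives and does not generally vanish, which is exactly why the assumption $\mathsf{P}=\mathsf{P}_X\times\mathsf{P}_Y$ with $\mathsf{P}_X,\mathsf{P}_Y$ probability measures is essential. Beyond this careful accounting I do not anticipate any genuine difficulty.
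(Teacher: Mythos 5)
Your proposal is correct and follows essentially the same route as the paper: both reduce the claim to showing that the clr marginal of $f^c_{\mathrm{int}}=f^c-f^c_X-f^c_Y$ vanishes, using the identity $\int_{\Omega_Y}f^c\,d\mathsf{P}_Y=f^c_X$, the zero-integral property of the univariate clr marginals, and the normalization $\mathsf{P}_Y(\Omega_Y)=1$ exactly where you invoke it. Your observation about the residual term $(1-\mathsf{P}_Y(\Omega_Y))f^c_X$ matches the paper's remark that the last cancellation requires the reference measure to be normalized.
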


Theorem \ref{thm:int-neutral} motivates the name \textit{interaction} density. Indeed, decomposition \eqref{ddecomp} applied to $f_{\mathrm{int}}$ reads
\[
f_{\mathrm{int}}=0_{\oplus}\oplus f_{\mathrm{int}},
\]
where $0_{\oplus}$ is the neutral element of perturbation (with respect to probability reference measure $\mathsf{P}$). Accordingly, the independent part of an interaction density is the null element $0_{\oplus}$. On the other hand, for an independent density, the interaction part is null. More in general, for any bivariate density $f$, the \emph{nearest} independence density is $f_{\mathrm{ind}}$, and its distance from it is precisely $||f_{\mathrm{int}}||_{\mathcal{B}^2(\mathsf{P})}$.
The squared norm $||f_{\mathrm{int}}||^2_{\mathcal{B}^2(\mathsf{P})}$ can be thus taken as a proper measure of dependence. For consistency with the discrete case \cite{egozcue15}, we shall name it \textit{simplicial deviance}, $\Delta^2(f)=||f_{\mathrm{int}}||^2_{\mathcal{B}^2(\mathsf{P})}$. Dividing the simplicial deviance by the squared norm of the bivariate density, one obtains a relative measure of dependence, hereafter named \textit{relative simplicial deviance},
\begin{equation}
R^2(f)=\frac{||f_{\mathrm{int}}||^2_{\mathcal{B}^2(\mathsf{P})}}{||f||^2_{\mathcal{B}^2(\mathsf{P})}},\quad 0\leq R^2(f)\leq 1.
\end{equation}

Note that $R^2(f)$ captures the amount of information contained in the interaction part with respect to the overall information within the density. If $R^2(f)$ is small ($R^2(f)\sim 0$), it means that most of the density is described by the independent part, and vice versa. A further advantage of the use of $R^2(f)$ is its relative character: $R^2(f)$ does not rely on the norm of the bivariate density which might be in practice influenced by the sample size of data being aggregated in the density.

Further, it can be proven that $f_{\mathrm{int}}$ is \textit{marginal invariant}, i.e., when the bivariate density $f$ is perturbed \emph{marginally} (i.e., by marginal densities $g_X$ and $g_Y$), the interaction part $f_{\mathrm{int}}$ is not changed. This important property \cite{yule12} is formulated in the next theorem.

\begin{theorem}\label{thm:marg-pert}
Let $\mathsf{P}=\mathsf{P}_X\times\mathsf{P}_Y$ be a probability measure, $f\in\mathcal{B}^2(\mathsf{P})$ a bivariate density with the orthogonal decomposition
$f=f_{\mathrm{ind}}\oplus f_{\mathrm{int}}$ and $g_X$, $g_Y$ marginal densities, in the sense that these latter are bivariate densities in $\mathcal{B}^2(\mathsf{P})$,  constant in one argument, i.e.,
\[
g_X(x,y) = \widetilde{g}_X(x),\quad g_Y(x,y) = \widetilde{g}_Y(y),\quad (x,y)\in\Omega. \quad\]
Then, the marginally perturbed density, $h=g_X\oplus g_Y\oplus f$, has the orthogonal decomposition $h=h_{\mathrm{ind}}\oplus h_{\mathrm{int}}$, where $h_{\mathrm{int}}=f_{\mathrm{int}}$ and $h_{\mathrm{ind}}=g_X\oplus g_Y\oplus f_{\mathrm{ind}}$.
\end{theorem}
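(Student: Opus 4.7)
The plan is to pass through the clr transformation, under which perturbation becomes ordinary addition in $L_0^2(\mathsf{P})$ and the geometric-marginal operation \eqref{xclrmar}--\eqref{gmarg} reduces to partial integration against $\mathsf{P}_Y$ or $\mathsf{P}_X$. In these coordinates the theorem becomes two linear identities, $h_{\mathrm{ind}}^c = g_X^c + g_Y^c + f_{\mathrm{ind}}^c$ and $h_{\mathrm{int}}^c = f_{\mathrm{int}}^c$, after which Theorem~\ref{thm:ortho} applied to $h$ supplies the orthogonality of the pair $(h_{\mathrm{ind}}, h_{\mathrm{int}})$ for free.

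First I would analyse the clr images of the one-sided perturbations. Since $\mathsf{P}$ is a probability measure, $\mathsf{P}(\Omega) = \mathsf{P}_X(\Omega_X) = \mathsf{P}_Y(\Omega_Y) = 1$, and \eqref{biclr} applied to $g_X(x,y) = \widetilde{g}_X(x)$ collapses to
\[
g_X^c(x,y) \;=\; \ln \widetilde{g}_X(x) \;-\; \int_{\Omega_X} \ln \widetilde{g}_X(x)\, d\mathsf{P}_X,
\]
a function of $x$ alone with zero $\mathsf{P}_X$-mean. Substituting this into \eqref{xclrmar} and \eqref{yclrmar} and again using $\mathsf{P}_X(\Omega_X) = \mathsf{P}_Y(\Omega_Y) = 1$ yields $(g_X)_X^c = g_X^c$ and $(g_X)_Y^c = 0$; symmetrically, $(g_Y)_Y^c = g_Y^c$ and $(g_Y)_X^c = 0$. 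In clr coordinates, $g_X$ and $g_Y$ therefore act as ``pure'' $X$- and $Y$-marginal contributions.

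Second, I would apply the clr-marginal operators to $h^c = g_X^c + g_Y^c + f^c$; by linearity of integration this yields $h_X^c = g_X^c + f_X^c$ and $h_Y^c = g_Y^c + f_Y^c$, all read as bivariate functions constant in the suppressed variable. Summing and invoking \eqref{inddecomp} gives $h_{\mathrm{ind}}^c = g_X^c + g_Y^c + f_{\mathrm{ind}}^c$, i.e.\ $h_{\mathrm{ind}} = g_X \oplus g_Y \oplus f_{\mathrm{ind}}$; subtracting from $h^c$ leaves $h_{\mathrm{int}}^c = f^c - f_{\mathrm{ind}}^c = f_{\mathrm{int}}^c$, i.e.\ $h_{\mathrm{int}} = f_{\mathrm{int}}$. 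The only delicate point is the normalisation bookkeeping: the identity $(g_X)_X^c = g_X^c$ relies on $\mathsf{P}_Y(\Omega_Y) = 1$, and without the probability-measure hypothesis a surviving multiplicative factor would break the clean identification $h_{\mathrm{int}} = f_{\mathrm{int}}$, which is precisely why the theorem requires a probability reference measure.
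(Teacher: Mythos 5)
Your proof is correct and follows essentially the same route as the paper's: identify the geometric marginals of $h$ as $g_X\oplus f_X$ and $g_Y\oplus f_Y$, conclude $h_{\mathrm{ind}}=g_X\oplus g_Y\oplus f_{\mathrm{ind}}$, and obtain $h_{\mathrm{int}}=h\ominus h_{\mathrm{ind}}=f\ominus f_{\mathrm{ind}}=f_{\mathrm{int}}$. The paper simply asserts the marginal identification in one line, whereas you verify it explicitly in clr coordinates (including the role of the normalization $\mathsf{P}_X(\Omega_X)=\mathsf{P}_Y(\Omega_Y)=1$), which is a welcome filling-in of detail rather than a different argument.
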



\section{An example with a truncated Gaussian Density}\label{sec:sim}


For the sake of illustration, we present an example of application of the proposed framework to densities in the Gaussian family, where computations can be made explicitly. Given that, in general, one may not expect to be able to perform this type of computations explicitly, in Section \ref{sec:splines} we develop a B-spline basis representation for bivariate distributions, from which the interactive and independent parts can be directly computed. We first consider a univariate Gaussian density, similarly as in \cite{hron16,delicado11}. For the sake of simplicity, we set the reference measure to the Lebesgue measure, and consider a zero-mean Gaussian density, truncated over the interval $I=[-T,T],\, T=5$. In this case, the density $f$ reads
\[
f(x) =_{\mathcal{B}^2(\lambda)} \exp\left\{-\frac{x^2}{2\sigma^2}\right\}, \quad x\in I.\]
The (univariate) clr-transformation of $f$ is defined as
\[
f^c(x) = -\frac{x^2}{2\sigma^2} + \frac{T^2}{6\sigma^2}, \quad x\in I.\]

Increasing the dimensionality of the sample space, we consider a zero-mean bivariate Gaussian density $\mathcal{N}_2(\mu, \Sigma)$ with respect to the (product) Lebesgue measure $\lambda[I] = \lambda[I_1]\times \lambda[I_2]$, truncated on a rectangular domain $I = I_1 \times I_2 \subset \mathbb{R}^2$, with $I_1=I_2=[-T,T],\, T=5$. In this case, the density is defined, for $\mathbf x=(x,y) \in I$, as
\[
f(x,y) =_{\mathcal{B}^2(\mathsf{P})} \exp\left\{\mathbf {x}^{\top}\Sigma^{-1}\mathbf{x}\right\} = \exp\left\{-\frac{1}{2(1-\rho^2)}\left[\frac{x^2}{\sigma_1^2} - 2\rho\frac{xy}{\sigma_1\sigma_2} + \frac{y^2}{\sigma_2^2}\right]\right\}, \]
with $\sigma_i^2 = \Sigma_{ii}$ and $\rho\in[0,1]$ being the correlation coefficient. In this setting, the clr transformation of $f$ is
\[
f^c(x,y) = -\frac{1}{2(1-\rho^2)}\left[\frac{x^2}{\sigma_1^2} - 2\rho\frac{xy}{\sigma_1\sigma_2} + \frac{y^2}{\sigma_2^2}\right] + \frac{T^2}{6(1-\rho^2)}\left(\frac{1}{\sigma_1^2}+\frac{1}{\sigma_2^2}\right).\]
Marginalizing the clr transformation with respect to $x$ and $y$ yields the clr-marginals
\begin{eqnarray*}
f_X^c(x) &=& -\frac{1}{2(1-\rho^2)}\cdot\frac{2Tx^2}{\sigma_1^2}+\frac{T^3}{3(1-\rho^2)}\cdot\frac{1}{\sigma_1^2}, \quad x\in I_1,\\
f_Y^c(y) &=& -\frac{1}{2(1-\rho^2)}\cdot\frac{2Ty^2}{\sigma_2^2}+\frac{T^3}{3(1-\rho^2)}\cdot\frac{1}{\sigma_2^2}, \quad y\in I_2.
\end{eqnarray*}
On this basis, the geometric marginals are easily obtained -- following \eqref{xclrmar} and \eqref{yclrmar} -- as
\begin{eqnarray*}
f_X(x) &=_{\mathcal{B}^2(\mathsf{P})}& \exp\left\{-\frac{1}{2(1-\rho^2)}\cdot\frac{2Tx^2}{\sigma_1^2}\right\}, \quad x\in I_1,\\
f_Y(y) &=_{\mathcal{B}^2(\mathsf{P})}& \exp\left\{-\frac{1}{2(1-\rho^2)}\cdot\frac{2Ty^2}{\sigma_2^2}\right\}, \quad y\in I_2.
\end{eqnarray*}
Note that both marginals still belongs to a Gaussian family, with parameters $\mu_X=\mu_Y = 0$ and $\sigma_X^2 = \sigma_1^2(1-\rho^2) / 2T$,
$\sigma_Y^2 = \sigma_2^2(1-\rho^2) / 2T$.

Given the marginals, the independence and interactive parts are built as in \eqref{indep} and \eqref{inter}, leading to
\begin{eqnarray*}
f_{\mathrm{ind}} (x,y) &=_{\mathcal{B}^2(\mathsf{P})}& \exp\left\{-\frac{2T}{2(1-\rho^2)}\left[\frac{x^2}{\sigma_1^2} + \frac{y^2}{\sigma_2^2}\right]\right\};\\
f_{\mathrm{int}} (x,y) &=_{\mathcal{B}^2(\mathsf{P})}& \exp\left\{-\frac{1}{2(1-\rho^2)}\left[\frac{(1-2T)x^2}{\sigma_1^2} - 2\rho\frac{xy}{\sigma_1\sigma_2} + \frac{(1-2T)y^2}{\sigma_2^2}\right]\right\}.
\end{eqnarray*}
The clr transformations of the latter parts are found as
\begin{equation*}
\begin{split}
f^c_{\mathrm{ind}}(x,y)  =  & -\frac{T}{1-\rho^2}\left(\frac{x^2}{\sigma_1^2} + \frac{y^2}{\sigma_2^2}\right) + \frac{T^3}{3(1-\rho^2)}\left(\frac{1}{\sigma_1^2} + \frac{1}{\sigma_2^2}\right); \\
f^c_{\mathrm{int}} (x,y) = &  -\frac{1}{2(1-\rho^2)}\left(\frac{(1-2T)x^2}{\sigma_1^2} - 2\rho\frac{xy}{\sigma_1\sigma_2} + \frac{(1-2T)y^2}{\sigma_2^2}\right) +\\
& + \frac{T^2(1-2T)}{6(1-\rho^2)}\left(\frac{1}{\sigma_1^2} + \frac{1}{\sigma_2^2}\right).
\end{split}
\end{equation*}
Note that, in case of independence ($\rho=0$),
\[
f^c_{\mathrm{int}}(x,y) = \frac{3}{2}\left(\frac{3x^2+T^2}{\sigma_1^2} + \frac{3y^2+T^2}{\sigma_2^2}\right),\]
which is non-zero. This does not stand in contradiction with Theorem \ref{thm:ind-amar}, since the previous computations are indeed referred to the Lebesgue measure, which is not a probability measure (it is not normalized). Analogous computations made in the case of a uniform measure (i.e., the product measure $U[I]$ built upon uniform measures $\mathsf{P}_1 = U[I_1]$, $\mathsf{P}_2 = U[I_2]$) lead to a null $f^c_{\mathrm{int}}$ for $\rho=0$. Indeed, in this case one has that the clr geometric marginals are defined as
\begin{eqnarray}
\label{marg-Gaus-x}f_X^c(x) &=& -\frac{1}{2(1-\rho^2)}\cdot\frac{x^2}{\sigma_1^2}+\frac{T^2}{6(1-\rho^2)}\cdot\frac{1}{\sigma_1^2}, \quad x\in I_1,\\
\label{marg-Gaus-y}f_Y^c(y) &=& -\frac{1}{2(1-\rho^2)}\cdot\frac{y^2}{\sigma_2^2}+\frac{T^2}{6(1-\rho^2)}\cdot\frac{1}{\sigma_2^2}, \quad y\in I_2,
\end{eqnarray}
leading to the following forms for the independent and interaction clr-densities
\begin{eqnarray*}
f^c_{\mathrm{ind}}(x,y) &=& -\frac{1}{2(1-\rho^2)}\left(\frac{x^2}{\sigma_1^2} + \frac{y^2}{\sigma_2^2}\right) + \frac{T^2}{6(1-\rho^2)}\left(\frac{1}{\sigma_1^2} + \frac{1}{\sigma_2^2}\right);\\
f^c_{\mathrm{int}} (x,y) &=& \frac{1}{(1-\rho^2)}\left(\rho\frac{xy}{\sigma_1\sigma_2}\right).
\end{eqnarray*}
It is then clear that the interaction part precisely captures the terms in $f^c$ depending on the mixed polynomial $xy$ (i.e., the interaction between $x$ and $y$), and its magnitude is controlled by the magnitude of $\rho$. In case of independence ($\rho=0$), $f^c_{\mathrm{int}}$ is null, and $f_{\mathrm{int}}=0_\oplus$. Moreover, in this case, the geometric marginals and the arithmetic marginals coincide. Note that the former are found by normalizing the exponential of the first terms of $f_X^c$ and $f_Y^c$ in \eqref{marg-Gaus-x}-\eqref{marg-Gaus-y}. In the degenerate case of a perfect linear dependence between the marginal variables $X$ and $Y$ ($|\rho|=1$), $f^c_{\mathrm{int}}$ is indeed degenerate as well. In fact, for $|\rho| = 1$ not only $f^c_{\mathrm{int}}$ is not defined, but $f$ does not belong to $\mathcal{B}^2(U[I])$, nor to $\mathcal{B}^2(\lambda[I])$ (the logarithms of the corresponding densities are not in $L^2(U[I])$ nor in $L^2(\lambda[I])$).

Figure \ref{fig:Gaus-leb} reports the contour plots associated with the bivariate Gaussian density with $\sigma_1 = 2$, $\sigma_2 = 3$ and $\rho = 0.75$, when the reference measure is the Lebesgue measure. Figure \ref{fig:Gaus-uni} reports the analogue contour plots when the quantities are computed w.r.t. a Uniform measure. For the sake of clarity, quantities referred to the Uniform reference are reported with a subscript $w$ in Figure \ref{fig:Gaus-uni}. The figures clearly show that the scale of the reference measure plays indeed a role, particularly for the shape of $f_{\mathrm{int}}$ (Figures \ref{fig:Gaus-leb}c-f and \ref{fig:Gaus-uni}c-f). This is in agreement with the conclusions of \citep{Talska2019}, where the effect of the reference measure on the geometry of (univariate) Bayes spaces is discussed. Given the statistical consequences of Theorems \ref{thm:ind-amar} and \ref{thm:int-neutral}, the representation based on a normalized reference shall be here preferred. In the latter case (Figure \ref{fig:Gaus-uni}), the independent part represents the (unique) distribution which would be built upon the geometric marginals -- $f_X$ being $\mathcal{B}^2$-equivalent to a truncated $N(0,4(1-0.75^2))$, and $f_Y$ the $\mathcal{B}^2$-equivalent to a truncated $N(0,9(1-0.75^2))$. The simplicial deviance $\Delta^2(f)=\|f_{\mathrm{int}}\|^2_{\mathcal{B}^2(\mathsf{P})}$ is in this case $\Delta^2(f) = 5.66$. The value of the relative simplicial deviance $R^2(f)$ represents the proportion of the norm of $f$ which can be attributed to the interaction part (i.e., to the deviation from independence). In this example, such proportion is 51\%, indicating that the dependence between the two variables is indeed relevant in the definition of the bivariate distribution.

\begin{figure}[ht]
  \centering
  \includegraphics[width=.95\textwidth]{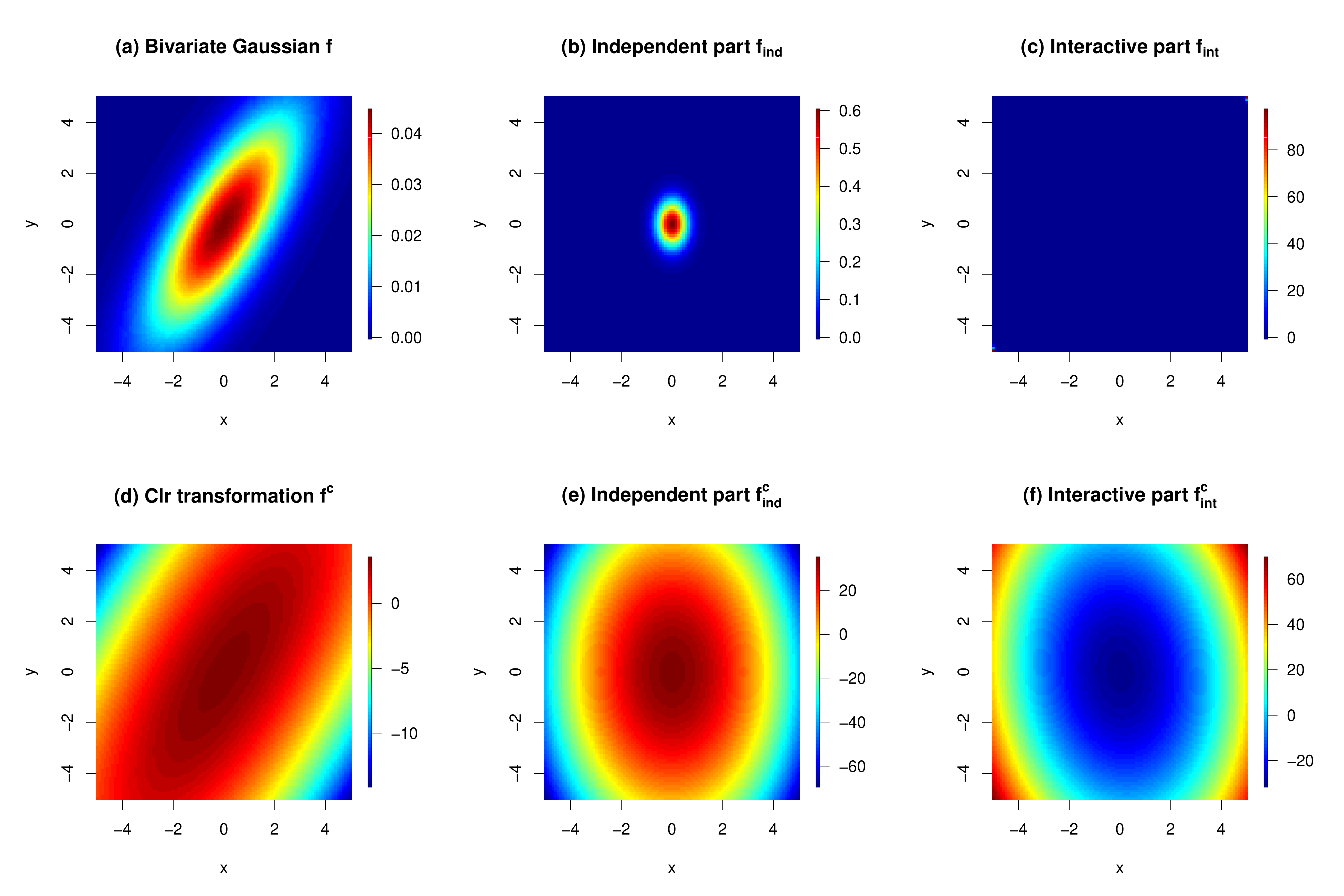}
  \caption{A simulated example with a truncated Gaussian density, with respect to the Lebesgue product measure $\lambda[I] = \lambda[I_1]\times \lambda[I_2]$, with $I_1 = I_2 = [-5,5]$, $\sigma_1 = 2$, $\sigma_2=3$, $\rho=0.75$.}\label{fig:Gaus-leb}
\end{figure}

\begin{figure}[ht]
  \centering
  \includegraphics[width=.95\textwidth]{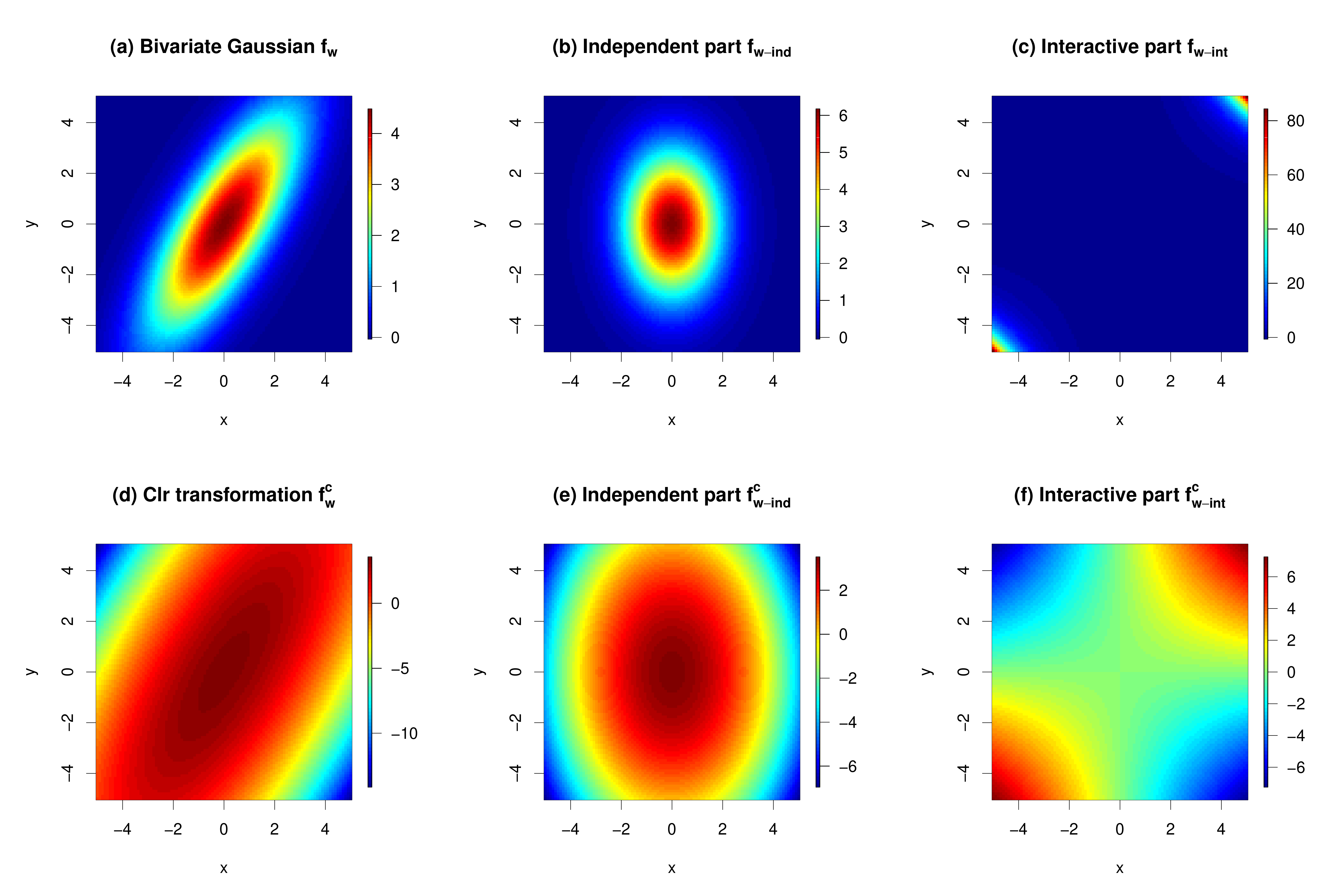}
  \caption{A simulated example with a Gaussian density, with respect to the Uniform product measure $\mathsf{P} = U[I_1]\times U[I_2]$, with $I_1 = I_2 = [-5,5]$, $\sigma_1 = 2$, $\sigma_2=3$, $\rho=0.75$.}\label{fig:Gaus-uni}
\end{figure}


\section{A spline representation for bivariate densities and their decompositions}\label{sec:splines}


Computational methods of FDA for the statistical analysis of datasets of bivariate densities are often based on basis representations for the data. In this section, we develop a spline representation for densities, which is based on a $B$-spline approximation for clr transformed data. This will allow for the smoothing of bivariate splines, and the direct computations of geometric marginals, independence and interaction parts, as well as of the relative simplicial deviance. On one hand, this avoids the necessity of developing splines directly in $\mathcal{B}^2(\mathsf{P})$; on the other one, it implies that the zero integral constraint needs to be taken into account.

This goal is here achieved by using tensor product splines \citep{deboor78,dierckx93,Schum} which are an established tool in the field and, in principle, enable a generalization to $k$ dimensions. However, for the purpose of this paper and for ease of notation, we shall focus on bivariate splines only. We also avoid considering the general reference measure $\mathsf{P}$ and focus on the Lebesgue measure (or its normalized counterpart, the uniform measure), although the case of a generic $\mathsf{P}$-reference can be reformulated as well \cite{Talska2019}.
We shall base our developments on \citep{Mach1,Mach2} -- the same representation being used for approximation of PDFs, e.g., in \citep{Mach,hron16,Tal,MenafoglioEtAl2018}; this setting is recalled in the Supplementary Material. Hereafter in this section, we limit to present the key points and results of our construction, leaving the details and proofs to the Supplementary Material.

\smallskip
We consider two strictly increasing sequences of knots
\begin{align}
  \Delta\lambda\, := \, \{\lambda_i\}_{i=0}^{g+1}, \quad  \lambda_{0}=a<\lambda_{1}<\ldots<\lambda_{g}<b=\lambda_{g+1} \label{knots_x},\\
  \Delta\mu \, := \, \{\mu_j\}_{j=0}^{h+1}, \quad \mu_{0}=c<\mu_{1}<\ldots<\mu_{h}<d=\mu_{h+1} \label{knots_y}
\end{align}
and denote by ${\cal S}_{kl}^{\Delta\lambda,\Delta\mu}(\Omega)$ the vector space of tensor product splines on $\Omega=[a,b]\times [c,d]$ of degree $k>0$ in $x$ and $l>0$ in $y$, with knots $\Delta\lambda$ in the $x$-direction and $\Delta\mu$ in $y$-direction. As usual in spline theory, to get a unique representation additional knots are considered, namely
\begin{alignat}{2}
\lambda_{-k} = \cdots = \lambda_{-1} = \lambda_{0} & = a, & \qquad b & = \lambda_{g+1} = \lambda_{g+2} = \cdots = \lambda_{g+k+1},\label{add_knots_x1}\\
\mu_{-l} = \cdots = \mu_{-1} = \mu_{0} & = c,  & \qquad d & = \mu_{h+1} = \mu_{h+2} = \cdots = \mu_{h+l+1}.\label{add_knots_y1}
\end{alignat}
The general goal here explored is that of smoothing the values $f_{i j}$ at points $(x_{i},y_{j})\in\Omega=[a, b]\times[c,d]$, $i=1,\ldots,n$, $j=1,\ldots,m$
using a tensor-product spline. The values $f_{i j}$ will be the clr-transformation of a discrete representation of the bivariate densities (i.e., histogram data), as showcased in Section \ref{sec:anthropometric}.
For the strictly increasing sequences of knots (\ref{knots_x}) and (\ref{knots_y}), a parameter $\alpha\in(0,1)$ and arbitrary
$u\in\left\{0,1,\ldots,k-1\right\}$ and $v\in\left\{0,1,\ldots,l-1\right\}$, we aim to find a spline  $s_{k l}(x, y)\in {\cal S}_{k l}^{\Delta\lambda,\Delta\mu}(\Omega)$
which minimizes the functional
\begin{equation}\label{smoothR2a}
    J_{uv}(s_{k l}) \, = \, \alpha\sum\limits_{i=1}^{n} \sum\limits_{j=1}^m \, \left[f_{i j}-s_{k l}(x_{i},y_j)\right]^{2} + (1-\alpha)\iint\limits_{\Omega}\left[s_{k l}^{(u,v)}(x,y)\right]^{2}\, \mbox{d}x\, \mbox{d}y,
\end{equation}
where the upper index $(u, v)$ stands for the derivative, specifically
$$
s_{k l}^{(u,v)}(x,y) \; = \; \frac{\partial^{u}}{\partial x^{u}} \, \frac{\partial^{v}}{\partial y^{v}} \; s_{k l}(x, y).
$$
Clearly, the choice of the parameter $\alpha$ and of the derivative orders $(u,v)$ affects the smoothness of the resulting spline. For the optimal choice of $\alpha$, the generalized cross-validation (GCV) criterion is used here, similarly as in \citep{machalova19}.

Given that we aim to reconstruct clr-transformed PDFs, the zero-integral constraint needs to be incorporated into the tensor product splines. Accordingly, we here aim to find a spline
$s_{k l}(x, y)\in {\cal S}_{k l}^{\Delta\lambda,\Delta\mu}(\Omega)$, $\Omega \, = \, [a,b] \times [c,d]$,
which minimizes the functional (\ref{smoothR2a}) and satisfies the additional condition
\begin{equation}\label{cond2}
\iint \limits_{\Omega} s_{kl}(x,y) \, \mbox{d}x \, \mbox{d}y \, = \, 0.
\end{equation}
We thus generalize to tensor product splines the idea presented in \citep{Mach} for one-dimensional splines.
To state the solution of the problem, and the conditions for its well-posedness, we need to introduce additional notation, that follows.

We express the tensor spline $s_{k l}(x,y)$ appearing in \eqref{smoothR2a} as
\begin{equation}\label{spline-biv}
  s_{kl}(x,y) \; = \; \sum\limits_{i=-k}^{g} \sum\limits_{j=-l}^{h} \, b_{ij} \, B_i^{k+1}(x) \, B_j^{l+1}(y),
\end{equation}
where $B_i^{k+1}(x)$, $B_j^{l+1}(y)$ are (univariate) $B$-splines defined on the sequence of knots $\{\lambda_i\}$ or $\{\mu_j\}$ and $b_{ij}$
are the coefficients of this spline. The tensor spline in
(\ref{spline-biv}) can be expressed in matrix notation as $
s_{kl}(x,y) \; = \; \mathbf{B}_{k+1}(x)\, \mathbf{B} \, \mathbf{B}_{l+1}^{\top}(y)$, where
$\mathbf{B}$ is a matrix of $B$-spline coefficients
$\mathbf{B}=\left(b_{ij}\right)_{i=-k,j=-l}^{g,h}$,
$\mathbf{B}_{k+1}(x)= \left(B_{-k}^{k+1}(x),\cdots,B_g^{k+1}(x)\right)$ is the collocation matrix of the $B$-splines $B_i^{k+1}(x)$, and
$\mathbf{B}_{l+1}(y)= \left(B_{-l}^{l+1}(y),\cdots,B_h^{l+1}(y)\right)$ is the collocation matrix of the $B$-splines $B_j^{k+1}(y)$. This admits also a tensor product representation, as
\begin{equation}\label{spline_tensor}
\begin{split}
s_{kl}(x,y) &  = \mathbf{B}_{k+1}(x) \, \mathbf{B} \, \mathbf{B}_{l+1}^{\top}(y)=
\left(\mathbf{B}_{l+1}(y) \otimes \mathbf{B}_{k+1}(x)\right) cs(\mathbf{B} ) = \\
& = { \mathbb B}(x,y) \, \, cs(\mathbf{B}),
\end{split}
\end{equation}
where
$
\mathbb{B}(x,y) \, := \mathbf{B}_{l+1}(y) \otimes \mathbf{B}_{k+1}(x)
$
and $cs(\mathbf{B})$ is the vectorized form of the matrix $\mathbf{B}$ (columnwise).

Let $\mathbf{S}_{u} = \mathbf{D}_{u}^x \mathbf{L}_{u}^x \cdots \mathbf{D}_{1}^x \mathbf{L}_{1}^x \in \mathbb{R}^{g+k+1-u,g+k+1}$, with $\mathbf{D}_j^x \; = \; (k+1-j)\, \mbox{diag} (d_{-k+j}^x,\ldots,d_g^x)$, $d_i^x \; = \; \dfrac{1}{\lambda_{i+k+1-j}-\lambda_i}$, $i=-k+j,\ldots,g$, and
\begin{equation*}
\mathbf{L}_j^x \; = \; \left( \begin{array}{cccc}
 -1 & 1      &.       & \\
    & \ddots & \ddots & \\
    &        & -1     & 1
        \end{array}\right)\in\mathbb{R}^{g+k+1-j,g+k+2-j}.
\end{equation*}
Similarly, let $\mathbf{S}_{v} = \mathbf{D}_{v}^y \mathbf{L}_{v}^y \cdots \mathbf{D}_{1}^y \mathbf{L}_{1}^y \in \mathbb{R}^{h+l+1-v,h+l+1}$, with $\mathbf{D}_j^y \; = \; (l+1-j)\, \mbox{diag} (d_{-l+j}^y,\ldots,d_h^y)$, $d_i^y \; = \; \dfrac{1}{\mu_{i+l+1-j}-\mu_i}$, $i=-l+j,\ldots,h$, and
\begin{equation*}
\mathbf{L}_j^y \; = \; \left( \begin{array}{cccc}
 -1 & 1      &.       & \\
    & \ddots & \ddots & \\
    &        & -1     & 1
        \end{array}\right)\in\mathbb{R}^{h+l+1-j,h+l+2-j}.
\end{equation*}

Denote by $\mathbb{S}$ the tensor product between $\mathbf{S}_{v}$ and $\mathbf{S}_{u}$, $\mathbb{S} := \mathbf{S}_{v} \otimes \mathbf{S}_{u}$ and define the matrices of inner products between $u$-th and $v$-th derivatives of the spline basis elements, $\mathbf{M}_{k, u}^{x}= \left(m_{ij}^x \right)_{i,j=-k+u}^g$,
$\mathbf{M}_{l, v}^{y}= \left(m_{ij}^y \right)_{i,j=-l+v}^h$, with
$$
m_{ij}^x:=\int\limits_a^b B_i^{k+1-u}(x)B_j^{k+1-u}(x) \, \mbox{d}x, \qquad
m_{ij}^y:=\int\limits_a^b B_i^{l+1-v}(y)B_j^{l+1-v}(y) \, \mbox{d}y.
$$
Finally, let $\mathbb{M}:=\mathbf{M}_{l,v}^{y} \otimes \mathbf{M}_{k,u}^{x}$.
Regarding the condition (\ref{cond2}), by using the well-known properties of the splines (see, e.g., \citep{deboor78,Schum}), it is possible to write
\begin{equation*}
\begin{split}
\int\limits_{a}^{b}\int\limits_{c}^{d} s_{kl}(x,y) \, \mbox{d}y \, \mbox{d}x & = \int\limits_{a}^{b}
\left[s_{k,l+1}(x,y) \right]_{c}^{d} \, \mbox{d}x =  \int\limits_{a}^{b} s_{k,l+1}(x,d)  \, \mbox{d}x
-  \int\limits_{a}^{b} s_{k,l+1}(x,c)  \, \mbox{d}x  = \\
& = \left[s_{k+1,l+1}(x,d) \right]_{a}^{b} - \left[s_{k+1,l+1}(x,c) \right]_{a}^{b} \, = \,  \\
& = s_{k+1,l+1}(b,d)-  s_{k+1,l+1}(a,d) - s_{k+1,l+1}(b,c) +  s_{k+1,l+1}(a,c)  \, = \, \\
& = c_{g,h} - c_{-k-1,h} - c_{g,-l-1} + c_{-k-1,-l-1}
\end{split}
\end{equation*}
using the notation
$
s_{k+1,l+1} (x,y) \, = \, \sum\limits_{i=-k-1}^{g} \sum\limits_{j=-l-1}^{h} c_{ij} \, B_{i}^{k+2}(x)\, B_{j}^{l+2}(y)
$
and the coincident additional knots (\ref{add_knots_x1}), (\ref{add_knots_y1}).
Accordingly, the condition (\ref{cond2}) is fulfilled if and only if
\begin{equation}\label{elim_c}
c_{-k-1,-l-1} \, = \, c_{-k-1,h} + c_{g,-l-1} - c_{g,h}.
\end{equation}
There is a useful relation between the $B$-spline coefficients of $s_{kl}(x,y)$ and $s_{k+1,l+1}(x,y)$,
which can be expressed in matrix notation as
$\mathbf{B} \, = \, \mathbf{D}_x \mathbf{K}_x \mathbf{C} \, \mathbf{K}_y^{\top}\mathbf{D}_y^{\top}$, where
$ \mathbf{C}=\left(c_{ij}\right)\in\mathbb{R}^{g+k+2,h+l+2}$,
$$
\mathbf{D}_x  =\, (k+1) \, diag\left\{ \dfrac{1}{\lambda_1-\lambda_{-k}},\ldots,\dfrac{1}{\lambda_{g+k+1}-\lambda_{g}} \right\},
$$
$$
\mathbf{D}_y  = \, (l+1) \, diag\left\{ \dfrac{1}{\mu_1-\mu_{-l}},\ldots,
\dfrac{1}{\mu_{h+l+1}-\mu_{h}} \right\},
$$
\begin{equation*}
\mathbf{K}_x \; = \; \left( \begin{array}{cccc}
 -1 & 1      &       & \\
    & \ddots & \ddots & \\
    &        & -1     & 1
        \end{array}\right)\in\mathbb{R}^{g+k+1,g+k+2},
 \qquad
 \mathbf{K}_y \; = \; \left( \begin{array}{cccc}
 -1 & 1      &       & \\
    & \ddots & \ddots & \\
    &        & -1     & 1
        \end{array}\right)\in\mathbb{R}^{h+l+1,h+l+2}.
\end{equation*}
By using notation $\mathbb{D} \, := \, \mathbf{D}_y \otimes \mathbf{D}_x$,
$\mathbb{K} \, := \, \mathbf{K}_y \otimes \mathbf{K}_x$ and
relation (\ref{elim_c}) to elide the coefficient $c_{-k-1,-l-1}$ we have
\begin{equation}\label{Ctilde}
  cs(\mathbf{B}) \, = \, \mathbb{D} \, \widetilde{\mathbb{K}} \, cs(\widetilde{\mathbf{C}}),
\end{equation}
where $cs(\widetilde{\mathbf{C}}) = (c_{-k,-l-1},\cdots,c_{g,-l-1},\cdots,c_{-k-1,h},\cdots,c_{g,h})^T$.
Having set this notation, we can now state explicitly the minimizer of \eqref{smoothR2a}, under the zero-integral constraint.

\begin{theorem}\label{thm:Juv-min}
The tensor smoothing spline $s_{kl}(x,y)\in{\cal S}_{k l}^{\Delta\lambda,
\Delta\mu}(\Omega)$, which minimizes the functional (\ref{smoothR2a}) under
the condition \eqref{cond2} is obtained as
\begin{equation}
\label{tss-thm}
	s_{kl}(x,y) \; = \; \mathbb{B}(x,y)  \,\mathbb{D} \, \widetilde{\mathbb{K}} \, cs(\widetilde{\mathbf{C^*}}),
\end{equation}
where
\begin{equation}\label{minB-thm}
cs(\widetilde{\mathbf{C^*}}) \; = \;
\left[ \widetilde{\mathbb{K}}^{\top} \mathbb{D}^{\top}
\left[(1-\alpha) \, \mathbb{S}^{\top} \mathbb{M} \, \mathbb{S} + \alpha \, \mathbb{B}^{\top} \, \mathbb{B}\right] \mathbb{D} \widetilde{\mathbb{K}}
\right]^{+} \alpha \,\widetilde{\mathbb{K}}^{\top}\mathbb{D}^{\top} \mathbb{B}^{\top} \, cs(\mathbf{F}),
\end{equation}
$\mathbf{F}=(f_{ij})$ and $cs(\mathbf{F})$ denotes its vectorized form.
\end{theorem}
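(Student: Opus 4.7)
The plan is to reduce the constrained minimization problem to an unconstrained quadratic minimization in the reduced coefficient vector $cs(\widetilde{\mathbf{C}})$ and then solve the resulting normal equations. First I would rewrite both terms of $J_{uv}$ in matrix form. Using the tensor representation $s_{kl}(x,y) = \mathbb{B}(x,y)\,cs(\mathbf{B})$ from \eqref{spline_tensor}, evaluation at the grid $(x_i,y_j)$ produces a large collocation matrix (abusing notation and denoting it again by $\mathbb{B}$) so that the fitting term becomes $\alpha\,\bigl(cs(\mathbf{F}) - \mathbb{B}\,cs(\mathbf{B})\bigr)^{\top}\bigl(cs(\mathbf{F}) - \mathbb{B}\,cs(\mathbf{B})\bigr)$.

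Next I would handle the roughness term. The standard $B$-spline derivative formula gives $\partial^{u}/\partial x^u\,\mathbf{B}_{k+1}(x) = \mathbf{B}_{k+1-u}(x)\,\mathbf{S}_u$ (and analogously in $y$), so
\begin{equation*}
s_{kl}^{(u,v)}(x,y) = \mathbf{B}_{k+1-u}(x)\,\mathbf{S}_u\,\mathbf{B}\,\mathbf{S}_v^{\top}\,\mathbf{B}_{l+1-v}^{\top}(y) = \bigl(\mathbf{B}_{l+1-v}(y)\otimes\mathbf{B}_{k+1-u}(x)\bigr)\,\mathbb{S}\,cs(\mathbf{B}).
\end{equation*}
Squaring and using separability of the integrand together with the definitions of $\mathbf{M}_{k,u}^{x}$, $\mathbf{M}_{l,v}^{y}$ and the mixed-product property of Kronecker products, the penalty collapses to the quadratic form $(1-\alpha)\,cs(\mathbf{B})^{\top}\mathbb{S}^{\top}\mathbb{M}\,\mathbb{S}\,cs(\mathbf{B})$. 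This is the step I expect to require the most care: lining up indices, dimensions, and the Kronecker identity $(A\otimes B)(C\otimes D) = (AC)\otimes(BD)$ so that $\mathbb{M} = \mathbf{M}_{l,v}^{y}\otimes\mathbf{M}_{k,u}^{x}$ emerges correctly between $\mathbb{S}$ and its transpose.

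Then I would impose the zero-integral condition. The telescoping/boundary computation preceding \eqref{elim_c} shows that \eqref{cond2} is equivalent to the single linear relation $c_{-k-1,-l-1} = c_{-k-1,h}+c_{g,-l-1}-c_{g,h}$ on the coefficients of $s_{k+1,l+1}$; substituting this relation into the identity $cs(\mathbf{B}) = \mathbb{D}\,\mathbb{K}\,cs(\mathbf{C})$ eliminates one coordinate and yields precisely \eqref{Ctilde}, i.e. $cs(\mathbf{B}) = \mathbb{D}\,\widetilde{\mathbb{K}}\,cs(\widetilde{\mathbf{C}})$. Inserting this into the matrix form of $J_{uv}$, the constrained problem becomes the unconstrained quadratic minimization
\begin{equation*}
\min_{cs(\widetilde{\mathbf{C}})}\ \bigl\|\sqrt{\alpha}\bigl(cs(\mathbf{F}) - \mathbb{B}\,\mathbb{D}\,\widetilde{\mathbb{K}}\,cs(\widetilde{\mathbf{C}})\bigr)\bigr\|^{2} + (1-\alpha)\,cs(\widetilde{\mathbf{C}})^{\top}\,\widetilde{\mathbb{K}}^{\top}\mathbb{D}^{\top}\mathbb{S}^{\top}\mathbb{M}\,\mathbb{S}\,\mathbb{D}\,\widetilde{\mathbb{K}}\,cs(\widetilde{\mathbf{C}}).
\end{equation*}

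Finally I would compute the gradient in $cs(\widetilde{\mathbf{C}})$ and set it to zero, producing the normal equations
\begin{equation*}
\widetilde{\mathbb{K}}^{\top}\mathbb{D}^{\top}\bigl[(1-\alpha)\,\mathbb{S}^{\top}\mathbb{M}\,\mathbb{S} + \alpha\,\mathbb{B}^{\top}\mathbb{B}\bigr]\mathbb{D}\,\widetilde{\mathbb{K}}\,cs(\widetilde{\mathbf{C}}) = \alpha\,\widetilde{\mathbb{K}}^{\top}\mathbb{D}^{\top}\mathbb{B}^{\top}\,cs(\mathbf{F}).
\end{equation*}
Because the system matrix can be rank-deficient (for instance when $u,v$ are strictly less than $k,l$ so $\mathbb{S}$ has a nontrivial nullspace, or when the grid is underdetermined), I would resolve the system using the Moore-Penrose pseudoinverse, obtaining \eqref{minB-thm}. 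Substituting back into $s_{kl}(x,y) = \mathbb{B}(x,y)\,\mathbb{D}\,\widetilde{\mathbb{K}}\,cs(\widetilde{\mathbf{C}})$ yields \eqref{tss-thm}. Uniqueness of the minimizing spline (as opposed to the coefficient vector) follows since the objective is convex and any two minimizers differ only by an element of the null space of the block matrix, which corresponds to the zero spline in $\mathcal{S}_{kl}^{\Delta\lambda,\Delta\mu}(\Omega)$ subject to \eqref{cond2}.
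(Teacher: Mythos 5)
Your proposal is correct and follows essentially the same route as the paper's proof: matrix form of the fitting and penalty terms via the collocation matrix $\mathbb{B}$ and the derivative operator $\mathbb{S}$ with $\mathbb{M}=\mathbf{M}_{l,v}^{y}\otimes\mathbf{M}_{k,u}^{x}$, elimination of the zero-integral constraint through $cs(\mathbf{B})=\mathbb{D}\,\widetilde{\mathbb{K}}\,cs(\widetilde{\mathbf{C}})$, and solution of the resulting normal equations by the Moore--Penrose pseudoinverse. The remarks on rank deficiency and uniqueness of the minimizing spline are sensible additions not present in the paper's argument.
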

As a by-product of the derivations leading to Theorem \ref{thm:Juv-min}, one indeed obtains the following result, which states the necessary and sufficient condition for bivariate splines to have zero integral (the proof is provided in the Supplementary Material).

\begin{theorem}
\label{splineR2}
For every  spline $s_{kl}(x,y) \in {\cal S}_{kl}^{\Delta\lambda,\Delta\mu}(\Omega)$, with the
representation
$s_{kl}\left(x,y\right)=\sum\limits_{i=-k}^{g} \sum\limits_{j=-l}^{h} b_{ij} \, B_{i}^{k+1} \left( x\right)\,  B_j^{l+1} \left(y\right)$,
the condition
$\iint \limits_{\Omega} s_{kl}(x,y) \, \mbox{d}x \mbox{d}y \, = \, 0$
is fulfilled if and only if
$$\sum\limits_{i=-k}^{g} \sum\limits_{j=-l}^{h} \; b_{ij} \left(\lambda_{i+k+1}-\lambda_i\right) \left(\mu_{j+l+1}-\mu_j\right) \; = \; 0.$$
\end{theorem}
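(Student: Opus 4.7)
The plan is to derive the integral of the tensor product spline directly from the well-known closed form for the integral of a univariate $B$-spline, namely
$$\int_a^b B_i^{k+1}(x)\,dx \;=\; \frac{\lambda_{i+k+1}-\lambda_i}{k+1},$$
which holds for every $B$-spline of order $k+1$ supported on the knots $\lambda_i,\dots,\lambda_{i+k+1}$, and analogously for $B_j^{l+1}$ over the $\mu$-partition. This identity is classical (see, e.g., de Boor or Schumaker, both of which are already cited in the paper) and follows from the recursion together with the partition-of-unity property, so I would simply invoke it.

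With this fact in hand, the proof reduces to a one-line computation. First, I would start from the representation
$$s_{kl}(x,y) \;=\; \sum_{i=-k}^{g}\sum_{j=-l}^{h} b_{ij}\, B_i^{k+1}(x)\, B_j^{l+1}(y)$$
and apply Fubini's theorem, exchanging the (finite) sum and the iterated integral, to get
$$\iint_\Omega s_{kl}(x,y)\,dx\,dy \;=\; \sum_{i=-k}^{g}\sum_{j=-l}^{h} b_{ij}\left(\int_a^b B_i^{k+1}(x)\,dx\right)\!\left(\int_c^d B_j^{l+1}(y)\,dy\right).$$
Then I would substitute the univariate integral identity in each factor, obtaining
$$\iint_\Omega s_{kl}(x,y)\,dx\,dy \;=\; \frac{1}{(k+1)(l+1)}\sum_{i=-k}^{g}\sum_{j=-l}^{h} b_{ij}(\lambda_{i+k+1}-\lambda_i)(\mu_{j+l+1}-\mu_j).$$
Since the prefactor $1/[(k+1)(l+1)]$ is a nonzero constant, the left-hand side vanishes if and only if the weighted sum on the right vanishes, which is exactly the stated condition.

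An alternative route, more in line with the bookkeeping already established in the passage preceding the statement, would be to start from the telescoping identity
$$\iint_\Omega s_{kl}(x,y)\,dx\,dy \;=\; c_{g,h} - c_{-k-1,h} - c_{g,-l-1} + c_{-k-1,-l-1}$$
involving the coefficients of $s_{k+1,l+1}$, and then to unwind the relation $\mathbf{B} = \mathbf{D}_x\mathbf{K}_x\mathbf{C}\,\mathbf{K}_y^\top\mathbf{D}_y^\top$ to turn the corner coefficients $c_{\bullet,\bullet}$ into a telescoping sum over the $b_{ij}$'s with the knot-difference weights. This route is slightly longer but stays entirely within the matrix formalism just built up.

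The main obstacle is not mathematical but organizational: deciding whether to quote the classical $B$-spline integral identity (the clean route of the first paragraph) or to grind through the telescoping in terms of the $c$-coefficients already introduced for the antiderivative spline $s_{k+1,l+1}$. I would go with the first route, as it makes the theorem a direct two-line consequence of a standard fact and keeps the statement perfectly self-contained, with the knot-difference weights appearing as the natural integrals of the individual tensor product basis functions.
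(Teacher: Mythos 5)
Your first route is correct, and it is genuinely more direct than the paper's argument. The paper does not invoke the closed form $\int_a^b B_i^{k+1}(x)\,dx=(\lambda_{i+k+1}-\lambda_i)/(k+1)$ as a black box; instead it re-derives it implicitly. It writes $s_{kl}(x,y)=\sum_i s_l^i(y)B_i^{k+1}(x)$ with $s_l^i(y)=\sum_j b_{ij}B_j^{l+1}(y)$, integrates in $y$ by passing to the antiderivative spline $s_{l+1}^i$ via the coefficient recursion $u_{ij}=u_{i,j-1}+b_{ij}/t_j$ (with $t_j=(l+1)/(\mu_{j+l+1}-\mu_j)$), uses the coincident boundary knots to evaluate $B_j^{l+2}(c)$ and $B_j^{l+2}(d)$, telescopes, then repeats the same antiderivative argument in $x$ to arrive at $\sum_{i}\sum_{j}b_{ij}/(d_i t_j)=0$, which is the stated condition. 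That is exactly your second, ``telescoping'' route, and the intermediate objects it produces ($s_k(x)=\sum_i v_iB_i^{k+1}(x)$ with $v_i=\sum_j b_{ij}/t_j$) are reused verbatim in the proofs of the marginal and independence-part representations (Theorems on $s_k$, $s_l$, $s_{kl}^{ind}$), so the longer derivation is not wasted effort in the paper's economy. Your Fubini-plus-integral-identity argument buys brevity and self-containedness (two lines given a standard cited fact, with the knot-difference weights appearing transparently as the integrals of the basis functions), at the cost of not setting up the coefficient machinery that the subsequent theorems need. Both are valid; just note that if you adopt the short route you would still have to develop the antiderivative coefficient relations separately for the later results.
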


The next important result is that, if using the proposed spline representation for the bivariate densities, the spline representations of the corresponding geometrical marginals in the clr space can be explicitly computed, and carry automatically the zero integral constraint, as stated in the next theorem.

\begin{theorem}\label{margTh}
Let $s_{kl}(x,y) \in {\cal S}_{kl}^{\Delta\lambda,\Delta\mu}(\Omega)$ such that
$\iint\limits_{\Omega} s_{kl}(x,y) \, \mbox{d}x \, \mbox{d}y \, = \, 0$ be given.
Let 
$s_k(x) \in {\cal S}_k ^{\Delta\lambda}[a,b]$,  $s_l(y) \in {\cal S}_l ^{\Delta\mu}[c,d]$
be defined as $s_k(x) = \int\limits_{c}^{d} s_{kl}(x,y) \mbox{d}y$, and $s_l(y) = \int\limits_{a}^{b} s_{kl}(x,y) \mbox{d}x$.
Then
$$
s_k(x)=\sum\limits_{i=-k}^g v_{i} B_{i}^{k+1}(x), \qquad s_l(y)=\sum\limits_{j=-l}^h u_{j} B_{j}^{l+1}(y)
$$
with
\begin{align*}
v_i &= \dfrac{b_{ih}}{t_h}+\cdots+\dfrac{b_{i,-l}}{t_{-l}}; \qquad t_{j'}= \dfrac{l+1}{\mu_{j'+l+1}-\mu_{j'}}, \quad j'=-l,\cdots,h,\\
u_j &= \dfrac{b_{jg}}{d_g}+\cdots+\dfrac{b_{j,-k}}{d_{-k}}; \qquad d_{j'}= \dfrac{k+1}{\lambda_{j'+k+1}-\lambda_{j'}}, \quad {j'}=-k,\cdots,g.
\end{align*}
Moreover, the splines $s_k(x),s_l(y)$ fulfil the zero-integral constraint, i.e.,
$$
\int\limits_{a}^{b} s_{k}(x) \mbox{d}x  \, = \, 0
\quad \mbox{and} \quad
\int\limits_{c}^{d} s_{l}(y) \mbox{d}y  \, = \, 0.
$$
\end{theorem}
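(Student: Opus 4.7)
The argument is essentially a direct calculation built on one standard identity for $B$-splines, combined with the zero-integral characterization of Theorem \ref{splineR2}. The key ingredient is the classical $B$-spline integral formula
\[
\int_c^d B_j^{l+1}(y)\, dy \;=\; \frac{\mu_{j+l+1}-\mu_j}{l+1} \;=\; \frac{1}{t_j},\qquad j=-l,\ldots,h,
\]
(see e.g.\ \citep{deboor78,Schum}), together with the analogous identity $\int_a^b B_i^{k+1}(x)\,dx = 1/d_i$. These identities hold precisely because the supports of the interior $B$-splines lie inside $[c,d]$ and the boundary $B$-splines are supported on the coincident additional knots \eqref{add_knots_x1}, \eqref{add_knots_y1}.

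For the first claim, the plan is to simply swap the sum and the integral in the definition of $s_k$: writing
\[
s_k(x) \;=\; \int_c^d \sum_{i=-k}^{g}\sum_{j=-l}^{h} b_{ij}\, B_i^{k+1}(x)\, B_j^{l+1}(y)\, dy
\;=\; \sum_{i=-k}^{g} B_i^{k+1}(x) \sum_{j=-l}^{h} b_{ij}\int_c^d B_j^{l+1}(y)\,dy,
\]
the inner sum evaluates to $\sum_j b_{ij}/t_j = v_i$ by the $B$-spline integral identity, giving the stated representation. The spline $s_k$ thus lies in ${\cal S}_k^{\Delta\lambda}[a,b]$ by construction. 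The derivation for $s_l(y)$ is entirely symmetric, with the roles of the two families of $B$-splines swapped.

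For the zero-integral property, the plan is to combine the expression for $v_i$ with Theorem \ref{splineR2}. Integrating $s_k(x)$ term by term and applying $\int_a^b B_i^{k+1}(x)\,dx = 1/d_i$ yields
\[
\int_a^b s_k(x)\,dx \;=\; \sum_{i=-k}^{g} \frac{v_i}{d_i} \;=\; \sum_{i=-k}^{g}\sum_{j=-l}^{h} \frac{b_{ij}}{d_i\, t_j}
\;=\; \frac{1}{(k+1)(l+1)}\sum_{i,j} b_{ij}(\lambda_{i+k+1}-\lambda_i)(\mu_{j+l+1}-\mu_j),
\]
and the right-hand side is zero by Theorem \ref{splineR2} applied to $s_{kl}$. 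The same computation with $x$ and $y$ interchanged yields $\int_c^d s_l(y)\,dy=0$.

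The steps are essentially mechanical once the $B$-spline integration identity is in hand; the only subtlety I expect to need care with is checking that the boundary $B$-splines $B_{-k}^{k+1}$ and $B_g^{k+1}$ (and their $y$-counterparts) indeed integrate to $1/d_{-k}$ and $1/d_g$ respectively, which uses the coincidence of the additional knots at the endpoints. After that, the algebraic link between $\sum_i v_i/d_i$ and the characterization of zero integral from Theorem \ref{splineR2} is immediate.
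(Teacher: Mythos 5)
Your proof is correct and follows essentially the same route as the paper's: both arguments reduce to the identity $\int_c^d B_j^{l+1}(y)\,dy=(\mu_{j+l+1}-\mu_j)/(l+1)=1/t_j$ (valid for the boundary splines because of the coincident additional knots), and both then obtain the zero-integral property by recognizing $\sum_{i,j} b_{ij}/(d_i t_j)$ as a multiple of the condition in Theorem \ref{splineR2}. The only difference is presentational: you invoke the $B$-spline integral identity directly, whereas the paper derives it by passing to the antiderivative spline $s_{k,l+1}$ and telescoping the coefficient recurrence $u_{ij}=u_{i,j-1}+b_{ij}/t_j$.
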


The proof of Theorem \ref{margTh} is reported in the Supplementary Material. We finally introduce a spline representation for independent and interactive parts of the bivariate densities.

\begin{theorem}\label{sp_ind}
Let $s_{kl}(x,y) \in {\cal S}_{kl}^{\Delta\lambda,\Delta\mu}(\Omega)$,
$s_{kl}(x,y)=\sum\limits_{i=-k}^g \sum\limits_{j=-l}^h b_{ij} B_{i}^{k+1}(x) B_{j}^{l+1}(y)$, such that
$\iint\limits_{\Omega} s_{kl}(x,y) \, \mbox{d}x \, \mbox{d}y \, = \, 0$
be given.
Let $s_k(x) \in {\cal S}_k ^{\Delta\lambda}[a,b]$,  $s_l(y) \in {\cal S}_k ^{\Delta\mu}[c,d]$
be defined as
$$
s_k(x):= \int\limits_{c}^{d} s_{kl}(x,y) \mbox{d}y, \qquad
s_l(y):= \int\limits_{a}^{b} s_{kl}(x,y) \mbox{d}x
$$
with representation in the form
$$
s_k(x)=\sum\limits_{i=-k}^g v_{i} B_{i}^{k+1}(x), \qquad
s_l(y)=\sum\limits_{j=-l}^h u_{j} B_{j}^{l+1}(y).
$$
Then the independent part of the bivariate density $s_{kl}(x,y)$ admits the spline representation
$$
s_{kl}^{ind}(x,y) \, = \, s_k(x)\, + \, s_l(y) \, = \, \sum\limits_{i=-k}^g \sum\limits_{j=-l}^h (v_{ij}+u_{ij}) B_{i}^{k+1}(x) B_{j}^{l+1}(y),
$$
and the interactive part is expressed as the spline
$$
s_{kl}^{int}(x,y) \, = \, s_{kl}(x,y) \, - \, s_{kl}^{ind}(x,y) \, = \,
\sum\limits_{i=-k}^g \sum\limits_{j=-l}^h (b_{ij}-v_{ij}-u_{ij}) B_{i}^{k+1}(x) B_{j}^{l+1}(y),
$$
where
$
v_{ij}:=\dfrac{1}{d-c}v_i, \; u_{ij}:=\dfrac{1}{b-a}u_j \; \forall i,j.
$
\end{theorem}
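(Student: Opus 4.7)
The plan is to combine Theorem~\ref{margTh}, which already supplies the coefficients $v_i$ and $u_j$ of the univariate marginals $s_k$ and $s_l$, with the partition-of-unity property of the $B$-spline basis to rewrite $s_{kl}^{ind}$ and $s_{kl}^{int}$ as tensor-product splines in ${\cal S}_{kl}^{\Delta\lambda,\Delta\mu}(\Omega)$. I would take the expansions $s_k(x)=\sum_{i=-k}^{g} v_i B_i^{k+1}(x)$ and $s_l(y)=\sum_{j=-l}^{h} u_j B_j^{l+1}(y)$ from Theorem~\ref{margTh} as given input, together with the fact that each of these univariate splines integrates to zero on its own domain, inherited from the zero-integral constraint on $s_{kl}$. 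From there, only two short steps remain: embed each univariate spline into the bivariate tensor-product basis, and then read off the coefficient of the interactive part by subtraction.

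For the embedding, I would use the standard identities $\sum_{j=-l}^{h} B_j^{l+1}(y)=1$ on $[c,d]$ and $\sum_{i=-k}^{g} B_i^{k+1}(x)=1$ on $[a,b]$. Multiplying $s_k(x)/(d-c)$ by the partition of unity in $y$ and grouping terms yields
\[
\frac{s_k(x)}{d-c}\;=\;\frac{1}{d-c}\sum_{i=-k}^{g} v_i B_i^{k+1}(x)\sum_{j=-l}^{h} B_j^{l+1}(y)\;=\;\sum_{i,j} v_{ij}\,B_i^{k+1}(x) B_j^{l+1}(y),
\]
with $v_{ij}=v_i/(d-c)$, and symmetrically $s_l(y)/(b-a)=\sum_{i,j}u_{ij}B_i^{k+1}(x)B_j^{l+1}(y)$ with $u_{ij}=u_j/(b-a)$. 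The normalizing factors $1/(d-c)$ and $1/(b-a)$ are chosen precisely so that the $y$-marginal of the embedded $s_k/(d-c)$ reproduces $s_k(x)$, and symmetrically in the other variable; this is the convention that is consistent with the clr-marginalisation used in Section~\ref{sec:decomp} to build the independence density.

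Summing the two embedded marginals then produces the advertised form $s_{kl}^{ind}(x,y)=\sum_{i,j}(v_{ij}+u_{ij})B_i^{k+1}(x)B_j^{l+1}(y)$. The representation of $s_{kl}^{int}=s_{kl}-s_{kl}^{ind}$ follows immediately by linearity of the tensor-product $B$-spline basis: subtracting the coefficients of $s_{kl}^{ind}$ from those of $s_{kl}$ gives the stated coefficient $b_{ij}-v_{ij}-u_{ij}$.

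The main hurdle I anticipate is conceptual rather than computational: pinning down the correct normalization convention for embedding a univariate spline into the bivariate tensor-product basis, so that the resulting $s_{kl}^{ind}$ has precisely the clr-marginals dictated by the decomposition of Section~\ref{sec:decomp}. Once that is fixed, everything else reduces to partition of unity and linearity, with no additional analytic input beyond the $B$-spline integration identity $\int_c^{d} B_j^{l+1}(y)\,\mathrm{d}y=(\mu_{j+l+1}-\mu_j)/(l+1)$ already exploited in the derivations leading to Theorem~\ref{margTh}.
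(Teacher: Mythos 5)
Your proposal is correct and follows essentially the same route as the paper's proof: embed each univariate marginal spline into the tensor-product basis via the partition-of-unity identity $\sum_j B_j^{l+1}(y)=1$ (resp.\ $\sum_i B_i^{k+1}(x)=1$), then obtain the interactive part by coefficientwise subtraction. You are in fact slightly more careful than the paper's own proof on the one delicate point, namely the normalizing factors $1/(d-c)$ and $1/(b-a)$ that reconcile the coefficients $v_{ij}=v_i/(d-c)$, $u_{ij}=u_j/(b-a)$ in the statement with the reference-measure convention of the marginalisation (the paper's proof simply sets $v_{ij}:=v_i$), so no gap remains.
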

The proof of Theorem \ref{sp_ind} is again reported to the Supplementary Material.

\smallskip
We remark that the results presented in this section form a computational cornerstone for the theoretical framework proposed in this work. Indeed, they not only allow for a complete characterization of bivariate densities though splines, but also for an explicit spline representation of the geometric marginals, as well as of the independence and interaction densities. Lastly, the spline representation enables one to compute the deviance and relative deviance from the interaction density $s_{kl}^{int}(x,y)$ simply as
$$
\Delta^2(s_{kl}^{int}(x,y)) \, = \,
\int\limits_a^b \int\limits_c^d  \left[s_{kl}^{int}(x,y)\right]^{2} \, \mbox{d}x\,\mbox{d}y.
$$
This result follows from an analogous development as that leading to the proof of Theorem \ref{thm:Juv-min} (namely, the derivation of $J_1$ by setting $u=0$, $v=0$, see the Supplementary Material for further details).

The next section showcases the application of the proposed methodology to a real dataset dealing with anthropometric measurements.


\section{An application to anthropometric densities}
\label{sec:anthropometric}


Periodic collection and reporting of anthropometric data such as body height and weight is essential to measure time trends in the prevalence of overweight and obesity at the population level. To this aim, a representative dataset of 4,436 Czech adolescents and young adults aged 15--31 years was collected as part of a large cross-sectional study (the reader may refer to \citep{gaba14,gaba16} for further details on the study). Participants to the study were selected on a volunteer basis among university students, staff and attendants to university open-house days and education exhibitions.
The sample sizes were, however, not distributed uniformly throughout the age intervals, mainly due to {a broader participation by} university students, see Table \ref{ssizes}.

\begin{table}[ht]
\caption{Sample sizes for age groups in anthropometric data.}\label{ssizes}
\begin{centering}
\begin{tabular}{l | c  c  c  c  c  c }
\hline
Age interval & $[15,16)$ & $[16,17)$ & $[17,18)$ & $[18,19)$ & $[19,20)$ & $[20,21)$ \\
Sample size & 95 & 126 & 234 & 492& 686 & 516 \\ \hline
Age interval& $[21,22)$ & $[22,23)$ & $[23,24)$ & $[24,25)$ & $[25,26)$ & $[26,27)$ \\
Sample size & 443& 450 & 385 & 318 & 220 & 155 \\ \hline
Age interval& $[27,28)$ & $[28,29)$ & $[29,30)$ & $[30,31)$&&\\
Sample size & 108 & 99 & 79 & 90&&\\ \hline
\end{tabular}

\end{centering}
\end{table}

Body height was measured with a precision of 0.1 cm by anthropometer P-375 (Trystom, Olomouc, Czech Republic) and body weight was measured using the InBody 720 device (Biospace Co., Ltd.; Seoul, Korea). Histogram data were then obtained from raw data, separately in each of $N=16$ age groups, i.e. $\left[15,16\right), \left[16,17\right),\ldots$, $\left[30,31\right)$. Note that the same age range was used also in \citep{machalova19}. The support of the marginal distribution of weights ($X$) and heights ($Y$) was set to the respective interval of observation, namely to $I_1=[40,100]$ for $X$ and $I_2=[155,195]$ for $Y$; the Sturge's rule was used to select the number of classes $K$,$L$ along the variable $X$,$Y$, respectively, in each of the histograms.
Possible (count) zeros in the histograms were imputed as advocated in \citep{martin15}; more precisely, a zero value in the class $k,l$ of a histogram was set to $2/3 n_{kl}$, where $n_{kl}$ stands for the number of observations within the class. For each age group, this led to the discrete representation $\{f_{kl}, k=1,\ldots,K,\, l=1,\ldots,L\}$ of the bivariate distributions, which were referred to the midpoints of the classes $\{\mathbf{t}_{kl}=(x_k,y_l), k=1,\ldots,K,\, l=1,\ldots,L\}$. The associated discrete bivariate clr transformations were computed as
\[\mathrm{clr}(f)_{kl}=\ln f_{kl} - \frac{1}{KL}\sum_{k=1}^K\sum_{l=1}^L \ln f_{kl}.\]
These clr transformations were smoothed by using the tensor product smoothing splines with zero integral introduced in Section \ref{sec:splines}, considering a rectangular domain $\Omega = [40; 100] \times [155; 195]$. For each age class, the following strategy was considered to set the parameters for the smoothing procedure. Quadratic smoothing splines were employed in each direction, setting the knots to equispaced sequences in both directions, with spacing of 15 kg along $X$ and 10 cm along $Y$. The order of derivative in the penalty term was set to $u=1$, $v=1$. The smoothing parameter $\alpha$ was determined by means of GCV errors over all sampled bivariate densities, resulting in $\alpha=0.0496$, see Figure~\ref{gcv}.
\begin{figure}[ht]
  \centering
  \includegraphics[width=0.4\textwidth]{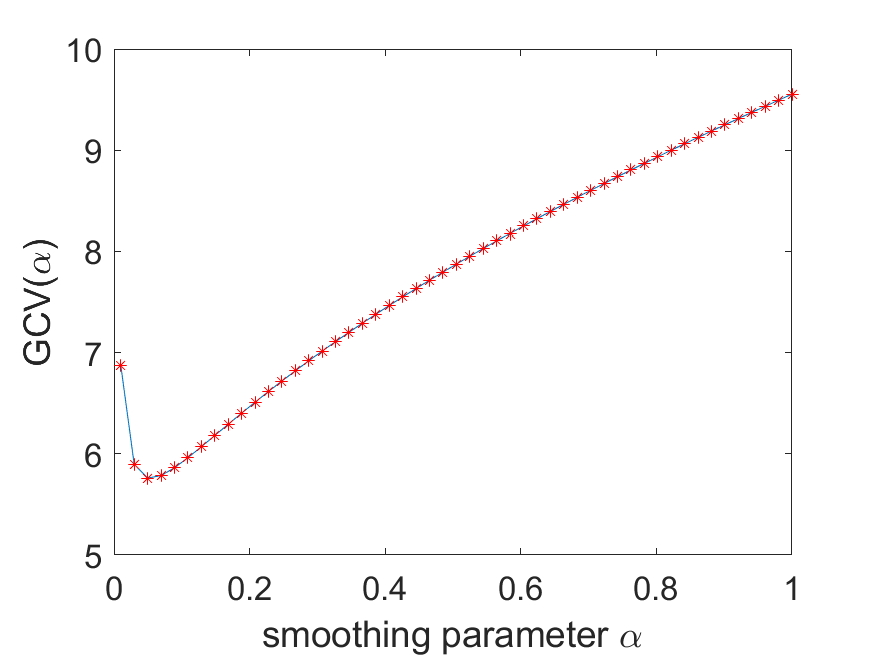}
  \caption{The mean GCV errors for the choice of the smoothing parameter $\alpha$ for fitting the anthropometric data by smoothing tensor splines with
	zero integral.}\label{gcv}
\end{figure}
The matrix $\mathbf{B}_t^*=(b_{ij}^t)$, $t=1, \ldots,16$, of coefficients for the smoothing spline $s_{kl}^t(x,y)$ with zero integral were finally obtained by \eqref{tss-thm}. A subset of the resulting clr densities, expressed with respect to the uniform measure on $\Omega$, are displayed in Figure \ref{fig-new}a. The corresponding densities (obtained by exponentiating the clr-transformations) are reported in Figure \ref{fig-new}b. The complete set of smoothed data and clr-transform is available in the Supplementary Material. One can clearly see the bimodal character of the densities, which is probably due to the presence of both males and females in the sample. In addition, some dependence between heights and weights is apparent, without a substantial difference between the two modes, for most densities.

\begin{figure}[ht]
		\centering
		\includegraphics[width=\textwidth]{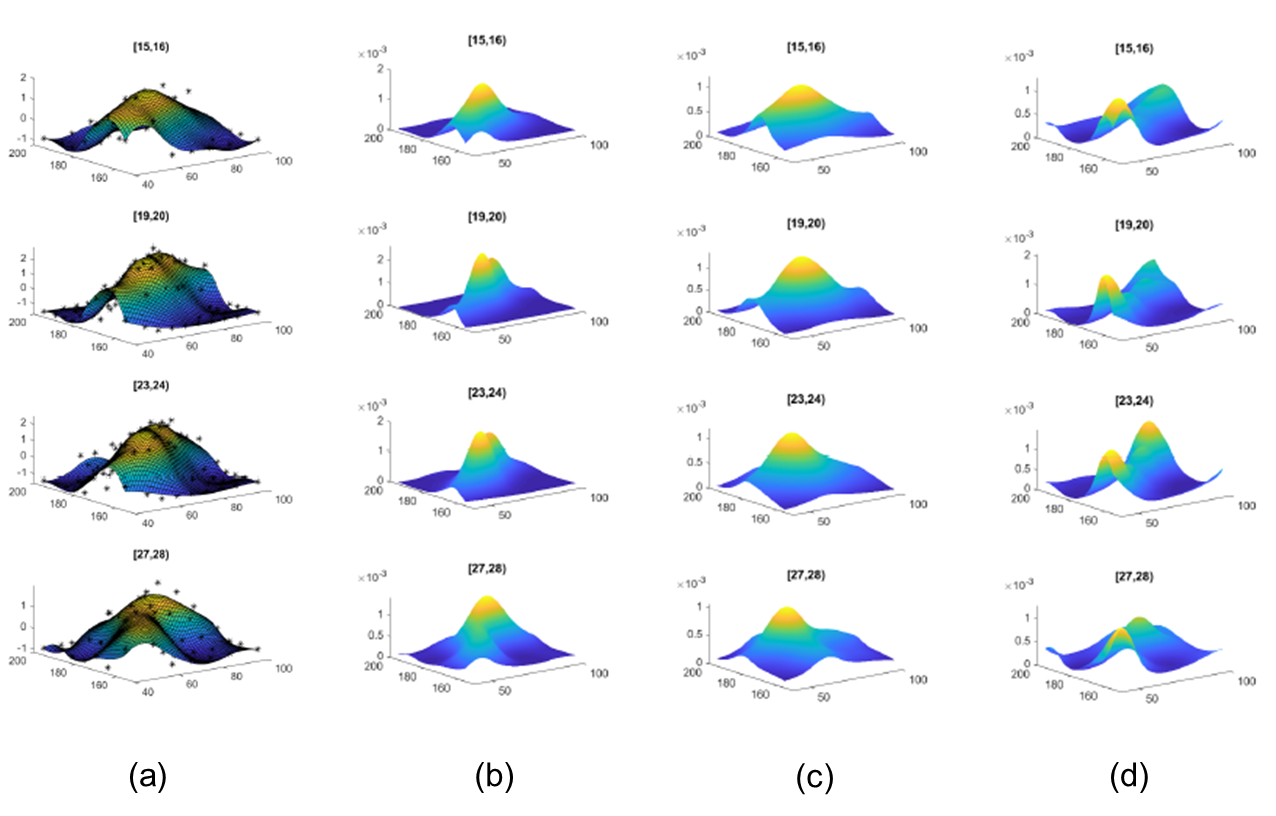}
  \caption{Anthropometric data: part of the smoothed data and their decomposition in independent and interactive part. (a) Smoothed clr-densities; symbols represent the discrete clr transformation at mid-points of histogram classes, the smooth surfaces represent the smoothed PDFs reconstructed via smoothing splines. (b) Smoothed densities. (c) Independent part. (d) Interactive part. Computations refer to the uniform reference measure.}\label{fig-new}
\end{figure}

From the smoothed data, the decomposition of the bivariate densities into their independent and interactive parts was computed using the results detailed in Section \ref{sec:splines}, based on the corresponding B-spline coefficients (see Theorem \ref{sp_ind}). The independent and interactive parts of the densities in Figure \ref{fig-new}b are reported in Figures \ref{fig-new}c and  \ref{fig-new}d.
It is interesting to observe that the bimodal character of the densities almost disappears in the independent densities, as this feature is mostly captured by the interaction densities. Apart of that, it is obvious that upper/lower/combined extreme values in the variables heights and weights have a relevant contribution on the dependence between the variables in the data set. This can be seen on the uplifted values appearing in corners of the majority of the interaction densities
(Figure \ref{fig-new}d).

We now aim to investigate whether the dependence between height and weight changes with ageing of the population. For this purpose, the simplicial deviances $\Delta^2(f_i)$ and the relative simplicial deviances $R^2(f_i)$, $i=1,\ldots,16$, were computed
as described in Section \ref{sec:splines},
see Table~\ref{tabsd}. Inspection of Table \ref{tabsd} suggests that simplicial deviances are clearly influenced by the sample sizes in the age intervals, yielding higher values of $\Delta^2(f_i)$ between ages 18 and 24, due more local effects resulting from the smoothing of histograms with more classes. These effects are filtered out in the relative simplicial deviances, whose time series is reported in Figure \ref{corr} (upper figure).

\begin{center}
\begin{table}[ht]
\caption{Values of norms of the bivariate densities and their decomposition together with the derived coefficients for all age groups in anthropometric data.}\label{tabsd}
\begin{tabular}{c | c || c | c | c | r | c}
$i$ & age group & $||f_i||_{\mathcal{B}^2(\mathsf{P})}$ & $||f^i_{\mathrm{ind}}||_{\mathcal{B}^2(\mathsf{P})}$ & $||f^i_{\mathrm{int}}||_{\mathcal{B}^2(\mathsf{P})}$ & $\Delta^2 (f_i)$ & $R^2(f_i)$\\
\hline
\hline
1  & [15,16) & 44.786 & 34.853 & 28.125 &  791.007 & 0.394 \\
2  & [16,17) & 40.437 & 31.518 & 25.333 &  641.772 & 0.392 \\
3  & [17,18) & 49.403 & 32.794 & 36.948 & 1365.180 & 0.559 \\
4  & [18,19) & 55.132 & 34.132 & 43.296 & 1874.570 & 0.617 \\
5  & [19,20) & 63.097 & 40.634 & 48.271 & 2330.051 & 0.585 \\
6  & [20,21) & 58.650 & 36.198 & 46.147 & 2129.520 & 0.619 \\
7  & [21,22) & 54.236 & 32.744 & 43.236 & 1869.361 & 0.636 \\
8  & [22,23) & 54.305 & 34.407 & 42.014 & 1765.162 & 0.599 \\
9  & [23,24) & 52.934 & 31.971 & 42.189 & 1779.876 & 0.635 \\
10 & [24,25) & 52.647 & 29.572 & 43.557 & 1897.188 & 0.684 \\
11 & [25,26) & 45.313 & 26.604 & 36.680 & 1345.455 & 0.655 \\
12 & [26,27) & 40.893 & 24.782 & 32.528 & 1058.088 & 0.633 \\
13 & [27,28) & 34.739 & 21.963 & 26.915 &  724.405 & 0.600 \\
14 & [28,29) & 33.712 & 19.344 & 27.610 &  762.289 & 0.671 \\
15 & [29,30) & 31.862 & 20.223 & 24.621 &  606.206 & 0.597 \\
16 & [30,31) & 30.849 & 17.031 & 25.722 &  661.618 & 0.695 \\
\end{tabular}
\end{table}
\end{center}

\begin{figure}[ht]
  \centering
  \includegraphics[width=0.5\textwidth]{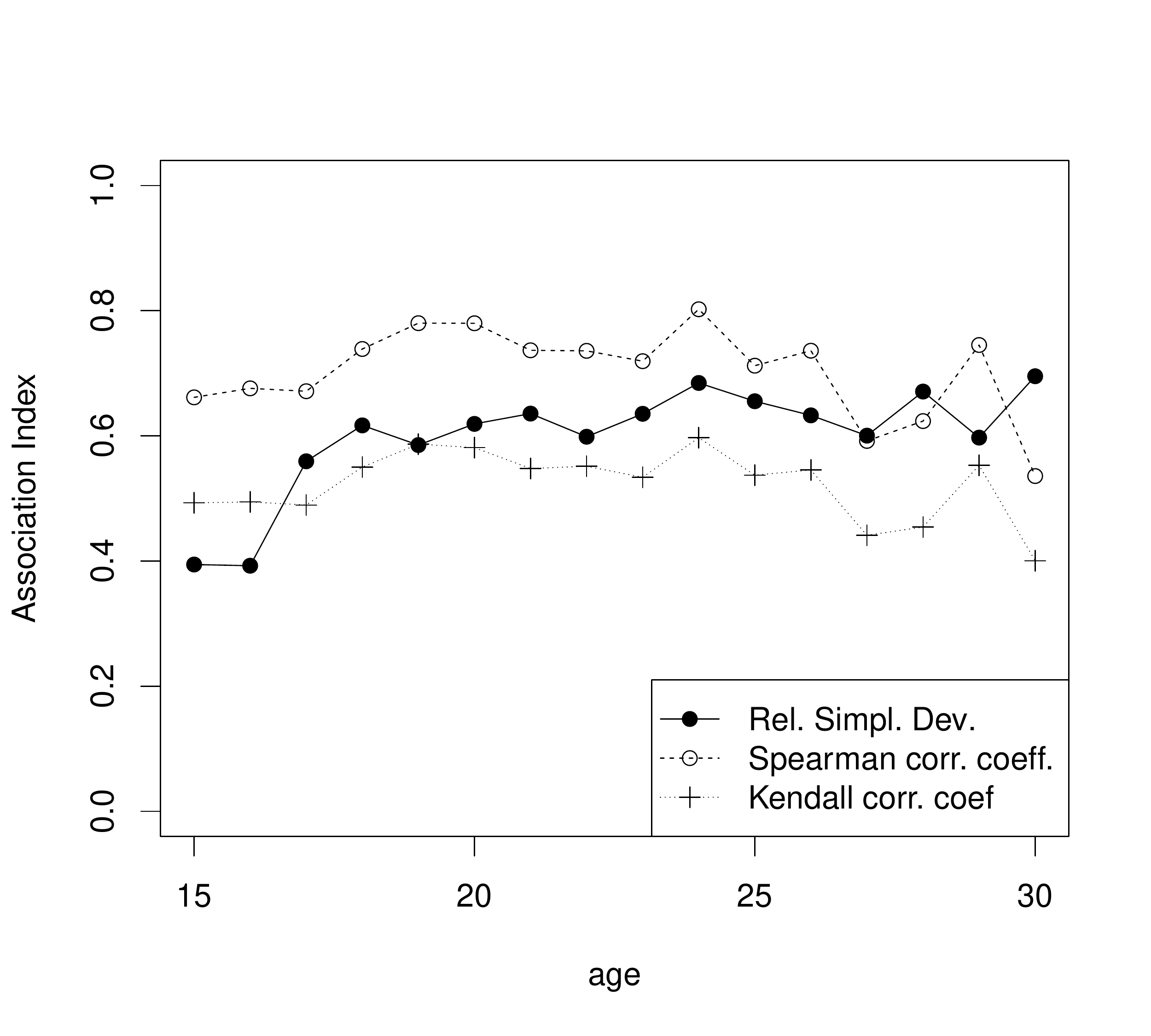}\\
  \caption{Time series of the relative simplicial deviances (solid line), the Spearman (dashed line) and the
Kendall (dotted line) correlation coefficients for anthropometric data.}\label{corr}
\end{figure}

A stationarity check performed with the Kwiatkowski-Phillips-Schmidt-Shin (KPSS) test \cite{kwiatkowski92} yields a narrow rejection of the stationarity assumption ($p$-value $p=0.0482$). As a consequence, (slightly) non-stationary effects emerge in the time series of relative simplicial deviances. A major such effect can be observed at the beginning of the time series; here the relative simplicial deviance slightly increases and gets stabilized around the nineteenth year. This development can be easily explained by pubertal and postpubertal changes in height and weight, which still occur in the mentioned time period.

It is interesting to compare the relative simplicial deviance with the well-known Spearman and Kendall correlation coefficients, which both are closely connected to the copula theory \citep{nelsen06}. Their time series are displayed in Figure \ref{corr} (dashed and dotted lines, respectively) and they look pretty similar. The effect of (post)pubertal changes is no more visible here; we can rather observe a slightly decreasing trend from around 24 years, which, interestingly, corresponds to border age of the ``Youth'' age group according to World Health Organization \cite{who}. This would indicate that since then the strength of the monotonic dependence between height and weight slightly weakens. This result reflects the fact that unlike both the mentioned correlation coefficients, the relative simplicial deviance captures the whole ``mass'' of the interactions between height and weight distributions, thus also including effects like possible tail dependence. Note that a similar idea of using a norm for development of a dependence measure is followed in \cite{tran15} with the Sobolev metric, thought there with the aim to capture rather solely monotone dependence.

We now further investigate the non-stationarity effects being observed for the time series of relative simplicial deviances. For this purpose, we formulate a compositional regression model with functional response \citep{Tal} and scalar regressors (i.e., the time $t$ of observation) which results in the linear model
\begin{equation}
\label{regr}
f_{\mathrm{int},i}=\beta_0\oplus\beta_1\odot t_i\oplus\varepsilon_i,
\end{equation}
for $i=1,\ldots,16$, with $\beta_0,\beta_1$ unknowns coefficients in $\mathcal{B}^2(\mathsf{P})$ and $\varepsilon_i$ a zero-mean random error. Note that, by linearity, the properties of $f_{\mathrm{int},i}$ (as stated in Theorems \ref{thm:ortho}, \ref{thm:ortho-mar} and \ref{thm:int-neutral}) are inherited by $\beta_0,\beta_1$, as $\mathbb{E}[f_{\mathrm{int}}] = \beta_0\oplus\beta_1\odot t $. Applying the clr transformation (\ref{biclr}) to both sides of the model (\ref{regr}) yields
\begin{equation}
\label{regr-clr}
f_{\mathrm{int},i}^c(x,y)=\beta_0^c(x,y)+\beta_1^c(x,y) t_i+\varepsilon_i^c(x,y),\quad i=1,\ldots,16, (x,y)\in I_1\times I_2.
\end{equation}
For the estimation of the functional regression parameters using the least squares criterion the smoothing tensor spline coefficients (Theorem \ref{sp_ind}) can be utilized. Similarly as in \cite{Tal}, the spline coefficients of the regression estimates (clr transformed densities) $\widehat{\beta}_0^c(x,y)$ and $\widehat{\beta}_1^c(x,y)$ fulfill the condition from Theorem \ref{splineR2}. The clr-transformation of the estimated parameters $\widehat{\beta}_0, \widehat{\beta}_1$ are reported in Figure \ref{fig:Beta}. A permutation test on the global significance of the parameter $\beta_1$ -- run using a Freedman and Lane scheme \citep{Freedman,Pini} with test statistics $T^2=\|\widehat{\beta}_1\|^2_{\mathcal{B}^2(\mathsf{P})}$ -- confirms the statistical significance of this parameter (p-value 0.032), suggesting that the time variation in the interaction between the random variables is indeed relevant. In the light of the shape of $\widehat{\beta}_1^c$ (Figure \ref{fig:Beta}) one can conclude that, along time, the interaction between weights and heights tends to get more concentrated at medium-high values or low values of the height, whereas it deflates for medium-low values of the height. A less pronounced time variation is instead observed for different values of the weight, suggesting that the highest variability in the dependence between these variables is indeed observed across values of the other variable.

\begin{figure}[ht]
  \centering
  \includegraphics[width=0.3\textwidth]{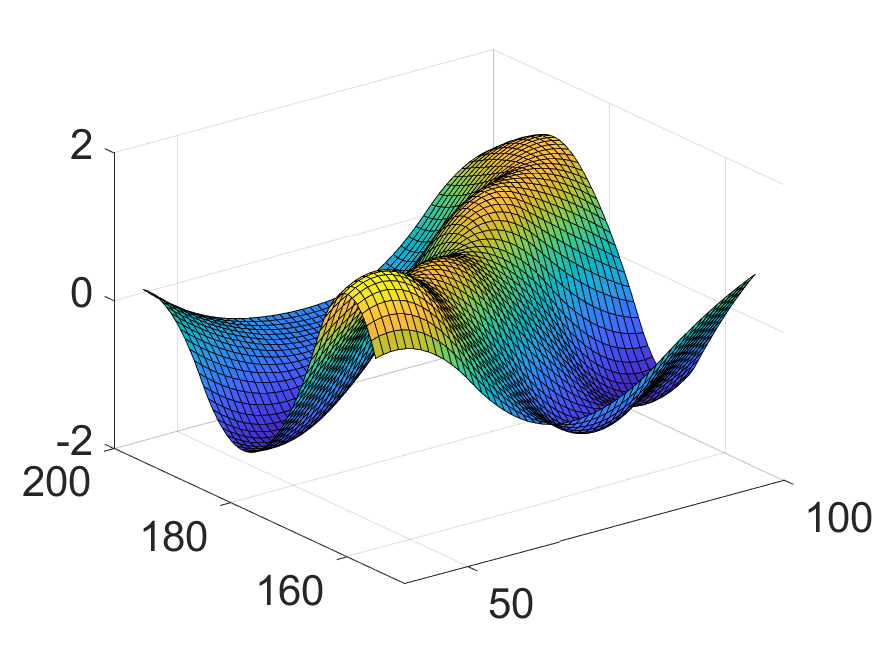}\includegraphics[width=0.3\textwidth]{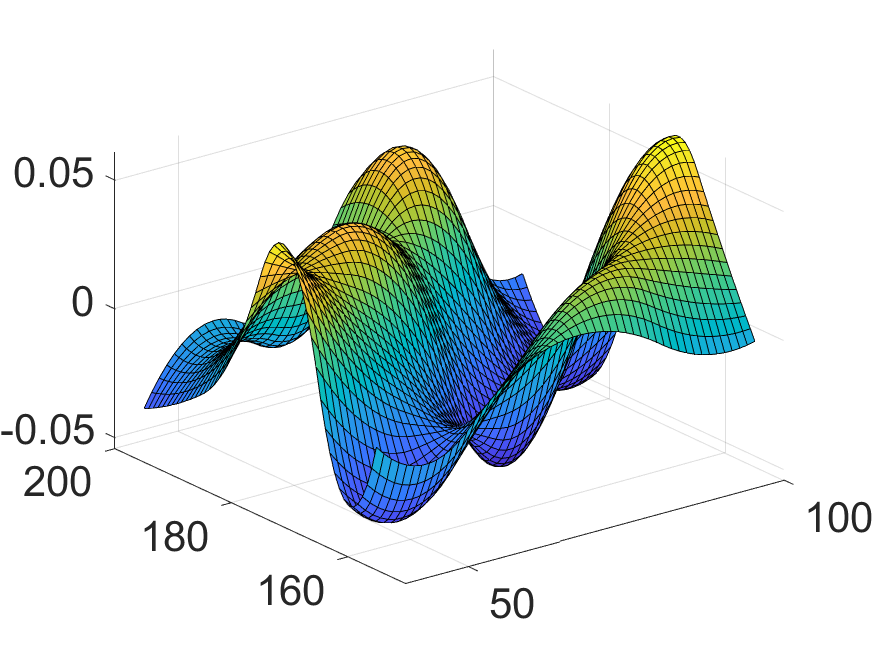}\\
  \caption{Clr-transformation of the estimated parameters $\widehat{\beta}_0^c(x,y)$ and $\widehat{\beta}_1^c(x,y)$.}\label{fig:Beta}
\end{figure}


\section{Conclusions}
\label{sec:conc}


The Bayes spaces methodology provides a robust and flexible framework for modelling data with relative character, including measures, probability density functions as well as compositional data. It can serve for many different purposes, from geometrical representation of the Bayes theorem to functional data analysis of a sample of densities. In this paper, its potential was further extended to bivariate density functions. Their decomposition into independent and interactive parts has a solid geometrical basis and allows for an appealing probabilistic interpretation if a normalized reference measure is used. This opens new perspectives for both further generalization to multivariate densities as well as to dependence modelling, with the aim to provide an alternative viewpoint than that offered by the widely-used copula theory. Note that the Bayes space theory is built for general types of positive measures (not necessarily absolutely continuous); we here foresee clear perspectives of development for a general Bayes space approach for distributions, in contexts and assumptions even closer to those of the well-established theory of copulas.
Other important envisioned impacts of this work are worth to be mentioned. For instance, the spline smoothing here developed may be used for a non-parametric estimate of PDFs, directly allowing for further data processing in the view of FDA, although at the expense of possible lower convergence rate than empirical distribution functions or empirical copulas constructed using ranks. In fact, from the application viewpoint and in the light of the promising theoretical properties presented in Section \ref{sec:decomp}, the Bayes space approach could be used to develop novel FDA methods for bivariate densities, and provide a broader statistical framework to pioneering applications as those developed in \cite{iacopini19}. An instance of this has been shown in Section \ref{sec:anthropometric}, where a linear regression model for interaction densities has been formulated to further investigate the variability of the interaction between two random variables along time. Such linear modeling would not be easy in other settings, based on non-linear and non-orthogonal relations between independent and interaction parts.
More in general, in this first work, we outlined a number of novel views allowed by the proposed spline representation, with particular reference to the statistical analysis of bivariate densities in Bayes spaces. In fact, we here envision a great potential of this framework, which can be used to provide a mathematical setting for the statistical processing of samples of bivariate densities in varied contexts. Note that distributional datasets are becoming increasingly available in the applications, as these could result from aggregation of massive data coming from large-scale studies or automated collection of data. Despite this, the statistical methods available for their analysis (particularly in the multivariate case) are still limited. Depending on whether such densities form a random sample, regionalized observations or time series, appropriate methods of FDA far beyond those explicitly mentioned in this work can be built, precisely grounding upon the presented theory and the associated spline representations.


\section*{Acknowledgements}

The first and second author were supported by Czech Science Foundation (GACR), GA19-01768S.





\bibliographystyle{plainnat} 
\bibliography{ref}


\newpage
{\bf\Large Bivariate densities in Bayes spaces: orthogonal decomposition and spline representation-
supplementary material}


\section*{Supplement A: Spline representation of univariate clr transformed densities}


In this supplementary section the terminology and basics for the spline representation of clr transformed univariate densities as $L^2$ functions with zero integral are recalled.
Let the sequence of knots $\Delta\lambda \, := \, \left\{ \lambda_i \right\}_{i=0}^{g+1}$, $\lambda_{0}=a<\lambda_{1}<\ldots<\lambda_{g}<b=\lambda_{g+1}$ be given.
The symbol ${\cal S}_{k}^{\Delta\lambda}[a,b]$ denotes the vector space of polynomial splines of degree $k>0$, defined on a finite interval $[a,b]$ with the sequence
of knots $\Delta\lambda$. It is known that $\dim\left({\cal S}_{k}^{\Delta\lambda}[a,b]\right)=g+k+1$.
Then every spline $s_{k}(x)\in{\cal S}_{k}^{\Delta\lambda}[a,b]$ has an unique representation
\begin{equation*}
    s_{k}\left(x\right)=\sum_{i=-k}^{g}b_{i}B_{i}^{k+1}\left(x\right).
\end{equation*}

For generalization of splines to the bivariate density case the following theorem, which was published in \cite{Tal}, is of paramount importance.
\begin{theorem}
\label{spline}
For a spline $s_{k}(x)\in{\cal S}_{k}^{\Delta\lambda}[a,b]$, $s_{k}\left(x\right)=\sum\limits_{i=-k}^{g}b_{i}B_{i}^{k+1}\left(x\right)$,
the condition $\int\limits_{a}^{b}s_{k}(x)\,\mbox{d}x=0$ is fulfilled if and only if
$\sum\limits_{i=-k}^{g}\;b_i\left(\lambda_{i+k+1}-\lambda_i\right)\;=\;0.$
\end{theorem}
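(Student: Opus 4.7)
The plan is to prove Theorem \ref{spline} by a direct computation relying on one classical integral identity for $B$-splines. Since the statement is an equivalence between the vanishing of an integral and the vanishing of a linear combination of coefficients, and the integral depends linearly on the coefficients, the whole argument reduces to evaluating $\int_a^b B_i^{k+1}(x)\,dx$ for each basis element and reading off the result.

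The key ingredient is the well-known identity
\[
\int_a^b B_i^{k+1}(x)\,dx \;=\; \frac{\lambda_{i+k+1}-\lambda_i}{k+1}, \qquad i=-k,\ldots,g,
\]
valid for $B$-splines of order $k+1$ on the extended knot sequence defined in \eqref{add_knots_x1}. This is a standard consequence of the recurrence $\tfrac{d}{dx}B_i^{k+2}(x) = (k+1)\bigl[\tfrac{B_i^{k+1}(x)}{\lambda_{i+k+1}-\lambda_i} - \tfrac{B_{i+1}^{k+1}(x)}{\lambda_{i+k+2}-\lambda_{i+1}}\bigr]$ together with the fact that $B$-splines of order $k+2$ with support strictly inside $(a,b)$ vanish at the endpoints, while at $x=a$ and $x=b$ only the boundary $B$-splines take value one due to the coincident end-knots. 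I would state this formula as a known fact (it is proved in, e.g., \cite{deboor78,Schum}, which are already cited in the excerpt), rather than re-deriving it in detail.

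Once this is in hand, the proof is just one line. By linearity of the integral,
\[
\int_a^b s_k(x)\,dx \;=\; \sum_{i=-k}^{g} b_i \int_a^b B_i^{k+1}(x)\,dx \;=\; \frac{1}{k+1}\sum_{i=-k}^{g} b_i\bigl(\lambda_{i+k+1}-\lambda_i\bigr).
\]
Since $k+1>0$, the left-hand side vanishes if and only if the sum on the right vanishes, which is precisely the claimed equivalence; both implications follow at once from this single equation, so there is no genuine obstacle. The only care needed is to be explicit that the formula uses the full extended knot sequence \eqref{add_knots_x1}, so that the boundary $B$-splines $B_{-k}^{k+1}$ and $B_g^{k+1}$ contribute with the correct weights $\lambda_1-\lambda_{-k}$ and $\lambda_{g+k+1}-\lambda_g$ respectively; otherwise one would get spurious boundary terms.
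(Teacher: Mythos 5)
Your proof is correct, and it reaches the conclusion by a genuinely more direct route than the paper. You invoke the classical closed form $\int_a^b B_i^{k+1}(x)\,dx = (\lambda_{i+k+1}-\lambda_i)/(k+1)$ for each basis function and finish by linearity in one line; the identity does hold here because the coincident end-knots in \eqref{add_knots_x1} ensure every $B_i^{k+1}$ has support inside $[a,b]$, a point you rightly flag. The paper instead works with the antiderivative spline $s_{k+1}(x)=\sum_{i=-k-1}^{g}c_iB_i^{k+2}(x)$, uses the coefficient recurrence $b_i=(k+1)(c_i-c_{i-1})/(\lambda_{i+k+1}-\lambda_i)$, telescopes to get $c_g-c_{-k-1}=\sum_i b_i/d_i$ with $d_i=(k+1)/(\lambda_{i+k+1}-\lambda_i)$, and evaluates $[s_{k+1}]_a^b=c_g-c_{-k-1}$. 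The two arguments rest on the same underlying fact (applying the paper's computation to a single basis function recovers your integral identity), but the paper's longer derivation is not wasted: the intermediate objects $c_i$, $d_i$ and the telescoping relation are reused verbatim in the proofs of the bivariate results (Theorems \ref{splineR2} and \ref{margTh}), whereas your version buys brevity and a cleaner statement of the one fact actually needed, at the cost of citing it rather than deriving it.
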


\begin{proof}
From the spline theory it is known that
$\int s_k(x) \, \mbox{d}x \, = \, s_{k+1}(x)$.
If the notation
$s_{k}(x)  = \sum\limits_{i=-k}^{g}b_{i}B_{i}^{k+1}\left(x\right) $ is used,
$s_{k+1}(x)  =  \sum\limits_{i=-k-1}^{g} c_{i} B_{i}^{k+2}\left(x\right) $,
there is known the relationship between their $B$-spline coefficients in the form
$$
b_i \; = \; (k+1) \dfrac{c_i-c_{i-1}}{\lambda_{i+k+1}-\lambda_i},
\quad \forall i=-k,\ldots,g.
$$
Thus the coefficients $c_i$ can be expressed as
$$
c_i \; = \; c_{i-1}+ \dfrac{b_i}{d_i}, \quad \forall i=-k,\ldots,g
$$
with $d_i=\dfrac{k+1}{\lambda_{i+k+1}-\lambda_i}$
and it means that
$$
c_g \, = \,  \dfrac{b_g}{d_g} \, + \, \cdots \, + \, \dfrac{b_{-k}}{d_{-k}} \, + \, c_{-k-1}.
$$
According to the coincident additional knots, see \cite{Mach} for details, it holds
\begin{equation}\label{int_c}
  \int\limits_{a}^{b} s_{k}(x) \,\mbox{d}x \; = \; \left[ s_{k+1}(x) \right]_a^b \; = \;
  s_{k+1}(b) - s_{k+1}(a) \; = \; c_{g} - c_{-k-1},
\end{equation}
and it is obvious that
$$
0 \, = \, \int\limits_{a}^{b} s_{k}(x) \, \mbox{d}x \quad \Leftrightarrow \quad c_g = c_{-k-1} \quad
\Leftrightarrow \quad \dfrac{b_g}{d_g} \, + \, \cdots \, + \, \dfrac{b_{-k}}{d_{-k}} \, = \, 0.
$$
Finally, the definition of $d_i$ implies that the following sequence of equivalences can be formulated,
$$
0 \, = \, \int\limits_{a}^{b} s_{k}(x) \, \mbox{d}x \quad  \Leftrightarrow \quad
\sum\limits_{i=-k}^{g} \dfrac{b_i}{d_i} \, = \, 0 \quad  \Leftrightarrow \quad
\sum\limits_{i=-k}^{g}  b_i\left( \lambda_{i+k+1}-\lambda_i \right) \, = \, 0.
$$
\end{proof}

\noindent
{\bf Algorithm}\\
The algorithm to find a spline $s_{k}(x)\in{\cal S}_{k}^{\Delta\lambda}[a,b]$ with zero integral, i.e., the respective
vector $\mathbf{b}=(b_{-k}, \cdots, b_g)^{\top}$, can be summarized as follows:\\
{\bf 1.} choose $g+k$ arbitrary $B$-spline coefficients $b_i\in\mathbb{R}$, $i=-k\ldots,j-1,j+1,\ldots,g$,\\
{\bf 2.} compute
$$
b_j \; = \; \dfrac{-1}{\lambda_{j+k+1}-\lambda_j} \; \sum\limits_{i=-k\atop i\neq j}^{g}\;b_i\left(\lambda_{i+k+1}-\lambda_i\right).
$$


\section*{Supplement B: Proofs}


\begin{proof}[Proof of Theorem \ref{thm:ortho}]
The clr transformation of the independence density $f_{\mathrm{ind}}(x,y)$ can
be written as
\begin{equation}
\label{clrind}
f^c_{\mathrm{ind}}(x,y)=\ln [f_X(x)f_Y(y)] - \frac{1}{\mathsf{P}(\Omega)} \int\limits_{\Omega_X} \int\limits_{\Omega_Y}\ln [f_X(x)f_Y(y)]\,d\mathsf{P}_X d\mathsf{P}_Y.
\end{equation}
This is invariant under rescaling of the product $f_X(x)f_Y(y)$. By choosing the following representations of $f_X(x)$ and $f_Y(y)$,
$$f_X(x)=\exp[f_X^c(x)],\quad f_Y(y)=\exp[f_Y^c(y)],$$
the second term in (\ref{clrind}) equals zero. Thus (\ref{clrind}) can be rewritten as
$$f^c_{\mathrm{ind}}(x,y)=\ln\{\exp[f_X^c(x)+f_Y^c(y)]\}=f_X^c(x)+f_Y^c(y).$$
For the sake of simplicity in notation, arguments are hereafter omitted. Consider
$$f^c_{\mathrm{int}}=f^c-f^c_{\mathrm{ind}}=f^c-f^c_X-f^c_Y,$$
then
\begin{align*}
\langle f^c_{\mathrm{int}},f^c_{\mathrm{ind}} \rangle_{L_0^2(\mathsf{P})}&=\langle f^c-f^c_X-f^c_Y,f^c_X+f^c_Y \rangle_{L_0^2(\mathsf{P})}=\\
& = \langle f^c,f_X^c\rangle + \langle f^c,f_Y^c\rangle_{L_0^2(\mathsf{P})} -||f_X^c||_{L_0^2(\mathsf{P})}^2-||f_Y^c||_{L_0^2(\mathsf{P})}^2- 2\langle f_X^c,f_Y^c\rangle_{L_0^2(\mathsf{P})}.
\end{align*}
For the first scalar product one has
\begin{align*}
\langle f^c,f_X^c\rangle_{L_0^2(\mathsf{P})}& = \int\limits_{\Omega_X} \int\limits_{\Omega_Y}f^c(x,y) f^c_X(x)\,d\mathsf{P}_X d\mathsf{P}_Y=\\
&= \int\limits_{\Omega_X}f^c_X(x)\int\limits_{\Omega_Y}f_c(x,y)\,d\mathsf{P}_X d\mathsf{P}_Y=
\int\limits_{\Omega_X}[f^c_X(x)]^2\,d\mathsf{P}_X=||f_X^c||_{L_0^2(\mathsf{P})}^2,
\end{align*}
similarly also $\langle f^c,f_Y^c\rangle_{L_0^2(\mathsf{P})}=||f_Y^c||_{L_0^2(\mathsf{P})}^2$.
Finally,
$$\langle f_X^c,f_Y^c\rangle_{L_0^2(\mathsf{P})} = \int\limits_{\Omega_X} \int\limits_{\Omega_Y}f_X^c(x) f^c_Y(y)\,d\mathsf{P}_X d\mathsf{P}_Y = \int\limits_{\Omega_X}f_X^c(x)\,d\mathsf{P}_X \cdot \int\limits_{\Omega_Y}f^c_Y(y)\,d\mathsf{P}_Y=0, $$
which completes the proof.
\end{proof}

\begin{proof}[Proof of Theorem \ref{thm:ind-amar}]
In case of independence, one may decompose a bivariate density as the product of its arithmetic marginals as $f(x,y)=f_{X,a}(x)f_{Y,a}(y)$. In Bayes spaces, this is reformulated as in \eqref{inddecomp}. Call $f_X^c(x)$, $f_Y^c(y)$ the clr-representation of the marginals, i.e., $f_{X,a}(x)=\exp[f_{X,a}^c(x)]$ and similarly $f_{Y,a}(y)=\exp[f_{Y,a}^c(y)]$. Using \eqref{inddecomp}, one may build the independent component as  $f^c_{\mathrm{ind}}(x,y)=f_{X,a}^c(x)+f_{Y,a}^c(y)$, which clearly coincides with $f$ itself.
The clr representation of the geometric $X$-marginal is derived -- by definition \eqref{xclrmar} -- as
$$\int_{\Omega_Y}f^c_{\mathrm{ind}}(x,y)\,d\mathsf{P}_Y=\int_{\Omega_Y}f_{X,a}^c(x)\,d\mathsf{P}_Y=\mathsf{P}_Y(\Omega_Y)f_{X,a}^c(x).$$
By considering that $\mathsf{P}_Y(\Omega_Y)=1$, the geometric $X$-marginal is obtained by applying the exponential as $f_X(x)=\exp[f_{X,a}^c(x)]$, i.e., it coincides with the arithmetic marginal $f_{X,a}(x)$. The case of $Y$-marginals would be proven analogously.
\end{proof}

\begin{proof}[Proof of Theorem \ref{thm:ortho-mar}]
The orthogonality of the marginals is easy to be proven in the clr space.
Specifically,
\begin{align*}
\langle f_X^c,f_Y^c\rangle_{L_0^2(\mathsf{P})} & =\left\langle \int_{\Omega_X}f^c(x,y)\,d\mathsf{P}_X,\int_{\Omega_Y}f^c(x,y)\,d\mathsf{P}_Y\right\rangle_{L_0^2(\mathsf{P})}=\\
& = \int_{\Omega_X}\int_{\Omega_Y}\left[\int_{\Omega_X}f^c(x,y)\,d\mathsf{P}_X\right]\left[\int_{\Omega_Y}f^c(x,y)\,d\mathsf{P}_Y\right]d\mathsf{P}_Xd\mathsf{P}_Y=\\
&= \int_{\Omega_Y}\left[\int_{\Omega_X}f^c(x,y)\,d\mathsf{P}_X\right]d\mathsf{P}_Y\cdot\int_{\Omega_X}\left[\int_{\Omega_Y}f^c(x,y)\,d\mathsf{P}_Y\right]d\mathsf{P}_X=0
\end{align*}
from the fact that $f_X^c\in L_0^2(\Omega_X)$ and $f_Y^c\in L_0^2(\Omega_Y)$.
In the next step the orthogonality between $f_{\mathrm{int}}\equiv f_{\mathrm{int}}(x,y)$ and the $X$-marginal is proven. Using the first part of this theorem and the relation
$\langle f^c,f_X^c\rangle_{L_0^2(\mathsf{P})}=||f_X^c||^2_{L_0^2(\mathsf{P})}$ from the proof of Theorem \ref{thm:ortho} it holds
\begin{align*}
\langle f_{\mathrm{int}}^c,f_X^c\rangle_{L_0^2(\mathsf{P})} & =\langle f^c- f_{\mathrm{ind}}^c,f_X^c\rangle_{L_0^2(\mathsf{P})}=\langle f^c- f_X^c-f_Y^c,f_X^c\rangle_{L_0^2(\mathsf{P})}= \\
 &= \langle f^c,f_X^c\rangle_{L_0^2(\mathsf{P})}-||f_X^c||_{L_0^2(\mathsf{P})}^2-\langle f_X^c,f_Y^c\rangle_{L_0^2(\mathsf{P})}=\\
 & =||f_X^c||_{L_0^2(\mathsf{P})}^2-||f_X^c||_{L_0^2(\mathsf{P})}^2=0.
\end{align*}
\end{proof}

\begin{proof}[Proof of Theorem \ref{thm:int-neutral}]
Equation \eqref{eq:thm3} can be equivalently stated in terms of the clr marginals as
\begin{equation}\label{eq:thm3-proof1}
f^c + f^c_{\mathrm{int},X} = f; \quad f^c +  f^c_{\mathrm{int},Y} = f^c,\end{equation}
$f^c$ denoting the clr  transformation of $f$.
In this case, one has
\begin{align*}
f^c + f^c_{\mathrm{int},X} &= f^c + \int_{\Omega_X} f^c_{\mathrm{int}} d\mathsf{P}_X =\\
 &= f^c + \int_{\Omega_X} f^{c}\, d\mathsf{P}_X - \int_{\Omega_X} f^{c}_X d\mathsf{P}_X - \int_{\Omega_X} f^{c}_Y d\mathsf{P}_X = \\
 &= f^c + f^c_Y - f^c_Y \cdot\mathsf{P}_X(\Omega_X) = f^c,
\end{align*}
where the last equality holds true if the measure $\mathsf{P}_X(\Omega_X)$ is normalized.
With analogous argument, the same equality is proven for $f_{\mathrm{int},Y}^c$.
\end{proof}

\begin{proof}[Proof of Theorem \ref{thm:marg-pert}]
From (\ref{inddecomp}) and the expression $g_{\mathrm{ind}}=(g_X\oplus f_X)\oplus (g_Y\oplus f_Y)$ it follows that $g_{\mathrm{ind}}$ is an independence density of $g$. Therefore
$$g_{\mathrm{int}}=g\ominus g_{\mathrm{ind}}=f\ominus f_{\mathrm{ind}}=f_{\mathrm{int}}.$$
\end{proof}

\begin{proof}[Proof of Theorem \ref{thm:Juv-min}]
Let the first term in (\ref{smoothR2a}) be denoted as
\begin{equation}
\label{j2}
J_1 \; = \; \alpha\sum\limits_{i=1}^{n} \sum\limits_{j=1}^m \, \left[f_{i j}-s_{k l}(x_{i},y_j)\right]^{2}
\end{equation}
and the second one as
\begin{equation}
\label{j1}
J_2 \; = \; (1-\alpha)\iint\limits_{\Omega}\left[s_{k l}^{(u,v)}(x,y)\right]^{2}\, \mbox{d}x\,\mbox{d}y
\end{equation}
We can express the functional $J_1$ from (\ref{j2}) in matrix notation as
\begin{equation*}
	\begin{split}
	J_1 & = \, \alpha \sum\limits_{i=1}^{n} \sum\limits_{j=1}^m \, \left[f_{i j}-s_{k l}(x_{i},y_j)\right]^{2} \, = \,
            \alpha \left[ cs(\mathbf{F}) - \mathbb{B} \, cs(\mathbf{B})\right]^{\top} \left[ cs(\mathbf{F}) - \mathbb{B} \, cs(\mathbf{B}) \right] \, = \\
	    & = \, \alpha \left(cs(\mathbf{F})\right)^{\top} cs(\mathbf{F}) - 2 \alpha \left(cs(\mathbf{B})\right)^{\top} \mathbb{B}^{\top} \, cs(\mathbf{F}) +
            \alpha \left(cs(\mathbf{B})\right)^{\top} \mathbb{B}^{\top} \, \mathbb{B} \, cs(\mathbf{B}),
	    \end{split}
\end{equation*}
where $\mathbf{F}=(f_{i j})$, $\mathbb{B} \, := \, \mathbf{B}_{l+1}(\mathbf{y}) \otimes \mathbf{B}_{k+1}(\mathbf{x})$, $\mathbf{y}=(y_1,\cdots,y_m)$,
$\mathbf{x}=(x_1,\cdots,x_n)$.
Now we consider the derivative of the spline. Similarly as in case of one-
dimensional splines, \cite{Mach,Mach1}, the derivative can be expressed by
using (\ref{spline-biv}), (\ref{spline_tensor}) as
\begin{equation}
\label{derivsplines}
\begin{split}
s_{k l}^{(u,v)}(x,y) & = \, \frac{\partial^{u}}{\partial x^{u}} \, \frac{\partial^{v}}{\partial y^{v}} \,
\sum\limits_{i=-k}^{g} \sum\limits_{j=-l}^{h} \, b_{ij} \, B_i^{k+1}(x) \, B_j^{l+1}(y) = \\
& = \, \frac{\partial^{u}}{\partial x^{v}} \, \frac{\partial^{v}}{\partial y^{v}} \,
 \left( \mathbf{B}_{l+1}(y) \otimes \mathbf{B}_{k+1}(x) \right) \,  cs(\mathbf{B}) =\\
 & = \left[ \mathbf{B}_{l+1-v}(y)\mathbf{S}_{v} \otimes \mathbf{B}_{k+1-u}(x)\mathbf{S}_{u} \right] \, cs(\mathbf{B}).
\end{split}
\end{equation}
With respect to the properties of tensor product, and using the notation
$\mathbb{B}^{u,v}(x,y) := \mathbf{B}_{l+1-v}(y) \otimes \mathbf{B}_{k+1-u}(x)$, the derivative given in  (\ref{derivsplines}) can be reformulated as
$
s_{k l}^{(u,v)}(x,y) = \mathbb{B}^{u,v}(x,y) \, \mathbb{S}  \, cs(\mathbf{B}).
$
Note that the flexibility in the choice of the orders $u,\, v$ in the derivatives $s_{k l}^{(u,v)}(x,y)$ can be considered as an element of innovation with respect to the classical tensor smoothing spline approach \cite{dierckx93}.
Then the functional $J_2$ from (\ref{j1}) can be rewritten as
\begin{equation*}
	\begin{split}
		J_2 & = \, (1-\alpha)\int_{\Omega}\left[s_{k l}^{(u,v)}(x,y)\right]^{2} \, \mbox{d}x\,\mbox{d}y \, = \\
		& = \, (1-\alpha)\int\limits_a^b \int\limits_c^d
		    \left[ \mathbb{B}^{u,v}(x,y) \, \mathbb{S} \, cs(\mathbf{B})\right]^{\top}
		    \mathbb{B}^{u,v}(x,y) \, \mathbb{S} \, cs(\mathbf{B}) \, \mbox{d}y \,\mbox{d}x=\\
		    & = \, (1-\alpha) \left(cs(\mathbf{B})\right)^{\top} \mathbb{S}^{\top} \int\limits_a^b \int\limits_c^d
		    \left(\mathbb{B}^{u,v}(x,y)\right)^{\top} \mathbb{B}^{u,v}(x,y) \, \mbox{d}y\,\mbox{d}x \; \mathbb{S} \,  cs(\mathbf{B}).
	\end{split}
\end{equation*}
Further,
\begin{eqnarray}
\lefteqn{\int\limits_a^b \int\limits_c^d \left(\mathbb{B}^{u,v}(x,y)\right)^{\top} \mathbb{B}^{u,v}(x,y) \, \mbox{d}y\, \mbox{d}x \, =} \nonumber\\
& = &\int\limits_a^b \int\limits_c^d \left[ \mathbf{B}_{l+1-v}(y) \otimes \mathbf{B}_{k+1-u}(x) \right]^{\top} \left[ \mathbf{B}_{l+1-v}(y) \otimes \mathbf{B}_{k+1-u}(x) \right]\, \mbox{d}y\,\mbox{d}x \, = \nonumber \\
& = & \int\limits_a^b \int\limits_c^d \left[ \mathbf{B}_{l+1-v}^{\top}(y)\mathbf{B}_{l+1-v}(y)\right] \otimes \left[ \mathbf{B}_{k+1-u}^{\top}(x)\mathbf{B}_{k+1-u}(x)\right] \mbox{d}y \,\mbox{d}x \, = \nonumber \\
& = & \mathbf{M}_{l,v}^{y} \otimes \mathbf{M}_{k,u}^{x}. \nonumber
\end{eqnarray}
This yields,
$	J_2 \; = \; (1-\alpha) \left(cs(\mathbf{B})\right)^{\top} \mathbb{S}^{\top} \mathbb{M} \, \mathbb{S} \, cs(\mathbf{B}).$
By putting together the matrix forms of $J_1$ and $J_2$, the functional $J_{uv}(s_{k l}(x, y))$ from (\ref{smoothR2a}) can be expressed as a function of unknown $B$-spline parameters $b_{ij}$, specifically
\begin{equation}\label{function_J}
\begin{split}
J_{uv}\left(cs(\mathbf{B})\right) \;   =  &  \;
\alpha \left(cs(\mathbf{F})\right)^{\top} cs(\mathbf{F})- 2\alpha \left(cs(\mathbf{B})\right)^{\top} \mathbb{B}^{\top} \, cs(\mathbf{F}) + \alpha \left(cs(\mathbf{B})\right)^{\top} \mathbb{B}^{\top} \, \mathbb{B} \, cs(\mathbf{B}) + \\
& \; +(1-\alpha) \left(cs(\mathbf{B})\right)^{\top} \mathbb{S}^{\top} \mathbb{M} \, \mathbb{S} \, cs(\mathbf{B}).
\end{split}
\end{equation}
The fulfilment of the zero integral condition (\ref{cond2}) is based on relation (\ref{Ctilde}). By using this the function
$J_{uv}(cs(\mathbf{B}))$ can be reformulated as
\begin{equation}\label{JuvC}
\begin{split}
J_{uv}\left(cs(\widetilde{\mathbf{C}})\right) \,   =
& \,  \alpha \left(cs(\mathbf{F})\right)^{\top} cs(\mathbf{F}) \, - \, 2\alpha \left( \mathbb{D} \, \widetilde{\mathbb{K}} \, cs(\widetilde{\mathbf{C}}) \right)^{\top} \mathbb{B}^{\top} \, cs(\mathbf{F}) \, + \\
& + \, \alpha \left( \mathbb{D} \, \widetilde{\mathbb{K}} \, cs(\widetilde{\mathbf{C}})\right)^{\top} \mathbb{B}^{\top} \, \mathbb{B} \, \mathbb{D} \,
\widetilde{\mathbb{K}} \, cs(\widetilde{\mathbf{C}})\, +\\
& \, +   (1-\alpha) \left(\mathbb{D} \, \widetilde{\mathbb{K}} \, cs(\widetilde{\mathbf{C}})\right)^{\top}
\mathbb{S}^{\top} \mathbb{M} \, \mathbb{S} \, \mathbb{D} \, \widetilde{\mathbb{K}} \, cs(\widetilde{\mathbf{C}}).
\end{split}
\end{equation}
Thus, the necessary and sufficient condition for the minimum of function
$J_{uv}(cs(\mathbf{B}))$ is
$
\dfrac{\partial \, J_{uv}(cs(\mathbf{B}))} {\partial \, cs(\mathbf{B})} \, = \, 0.
$
By applying this condition to (\ref{JuvC}) the following equation is obtained,
$$
\widetilde{\mathbb{K}}^{\top} \, \mathbb{D}^{\top} \, \left[ (1-\alpha) \mathbb{S}^{\top} \, \mathbb{M} \, \mathbb{S} \, + \, \alpha \, \mathbb{B}^{\top} \,
\mathbb{B}\right] \mathbb{D} \, \widetilde{\mathbb{K}} \, cs(\widetilde{\mathbf{C}}) \, = \, \alpha \, \widetilde{\mathbb{K}}^{\top} \, \mathbb{D}^{\top}
\mathbb{B}^{\top} \, cs(\mathbf{F}).
$$
Then the solution to this system is given by
\begin{equation}\label{minB-thm}
cs(\widetilde{\mathbf{C^*}}) \; = \;
\left[ \widetilde{\mathbb{K}}^{\top} \mathbb{D}^{\top}
\left[(1-\alpha) \, \mathbb{S}^{\top} \mathbb{M} \, \mathbb{S} + \alpha \, \mathbb{B}^{\top} \, \mathbb{B}\right] \mathbb{D} \widetilde{\mathbb{K}}
\right]^{+} \alpha \,\widetilde{\mathbb{K}}^{\top}\mathbb{D}^{\top} \mathbb{B}^{\top} \, cs(\mathbf{F})
\end{equation}
And finally, the matrix $\mathbf{B}^*$ of coefficients for the resulting smoothing spline with zero integral is obtained by
\begin{equation}\label{finB}
cs(\mathbf{B}^*) \, = \, \mathbb{D} \, \widetilde{\mathbb{K}} \, cs(\widetilde{\mathbf{C}^*}).
\end{equation}
\end{proof}

\begin{proof}[Proof of Theorem \ref{splineR2}]
The spline $s_{kl}(x,y) \in {\cal S}_{kl}^{\Delta\lambda,\Delta\mu}(\Omega)$ can be expressed as
$$s_{kl}\left(x,y\right) \, = \, \sum\limits_{i=-k}^{g} \sum\limits_{j=-l}^{h} b_{ij} \, B_{i}^{k+1} \left( x\right)\,  B_j^{l+1} \left(y\right)
\, = \, \sum\limits_{i=-k}^{g} s_{l}^i(y) \, B_{i}^{k+1} \left( x\right),$$
where
$s_{l}^i (y) \, := \, \sum\limits_{j=-l}^{h} b_{ij} \,  B_j^{l+1} \left(y\right)$, $i=-k,\cdots,g$, are in fact one-dimensional splines
of order $l+1$ for the $y$-variable with coefficients $b_{ij}$, $j=-l,\cdots,h$. Then
$$
\int s_{kl}(x,y) \, \mbox{d}y \, = \, \int \sum\limits_{i=-k}^{g} s_l^i(y) \, B_i^{k+1}(x) \, \mbox{d}y \, = \, \sum\limits_{i=-k}^{g} B_i^{k+1}(x)
\, \int s_l^i(y) \, \mbox{d}y
$$
and
$$
\int s_l^i(y) \, \mbox{d}y \, = \, s_{l+1}^i(y), \quad \mbox{with} \quad s_{l+1}^i(y) \, = \, \sum\limits_{j=-l-1}^h u_{ij} \, B_j^{l+2}(y).
$$
By considering the case of one-dimensional splines, specifically the proof of Theorem \ref{spline}, it holds
\begin{equation}\label{vztah_u_b}
  u_{ij} \, = \, u_{i,j-1}+\dfrac{b_{ij}}{t_j}, \quad \mbox{where} \quad t_j= \dfrac{l+1}{\mu_{j+l+1}-\mu_j}, \quad j=-l,\cdots,h,
\end{equation}
i.e.
\begin{equation}\label{u}
    u_{ih} \, = \, \dfrac{b_{ih}}{t_h} + \ldots + \dfrac{b_{i,-l}}{t_{-l}}+u_{i,-l-1}, \quad \forall i.
\end{equation}
Altogether
\begin{equation*}
  \begin{split}
     \int s_{kl}(x,y) \, \mbox{d}y & = \, \sum\limits_{i=-k}^{g} B_i^{k+1}(x) \sum\limits_{j=-l-1}^{h} u_{ij} \, B_{j}^{l+2}(y) \, = \\
                                   & = \, \sum\limits_{i=-k}^{g}\sum\limits_{j=-l-1}^{h} u_{ij} \, B_{i}^{k+1}(x) \, B_{j}^{l+2}(y) \, =: \, s_{k,l+1}(x,y).
  \end{split}
\end{equation*}

Subsequently, using the last expression, the integral can be expressed as
\begin{equation}
  \begin{split}\label{i1}
     \int\limits_{c}^{d} s_{kl}(x,y) \, \mbox{d}y  & = \, \left[ s_{k,l+1}(x,y) \right]_{c}^{d} \, = \, s_{k,l+1}(x,d) - s_{k,l+1}(x,c) \, = \\
     & = \, \sum\limits_{i=-k}^{g} \sum\limits_{j=-l-1}^{h} u_{ij} \, B_{i}^{k+1}(x) \, \left( B_{j}^{l+2}(d) - B_j^{l+2}(c) \right) \, = \\
     & = \, \sum\limits_{i=-k}^{g} B_{i}^{k+1}(x) \left( u_{ih}- u_{i,-l-1} \right) \, = \, \sum\limits_{i=-k}^{g} v_i \, B_i^{k+1}(x) \, =: \, s_{k}(x),
  \end{split}
\end{equation}
for
\begin{equation}\label{def_v}
  v_{i} \, := \, u_{ih} - u_{i,-l-1} \qquad \forall i=-k,\cdots,g,
\end{equation}
because with coincident additional knots (\ref{add_knots_x1}), (\ref{add_knots_y1}) it holds
\[ B_j^{l+2}(d)= \left\{ \begin{array}{cl}
                           1 & \quad\mbox{if} \; j = h \\
                           0 & \quad\mbox{otherwise}
                         \end{array} \right. \qquad
B_j^{l+2}(c)= \left\{ \begin{array}{cl}
                        1 & \quad\mbox{if} \; j = -l-1 \\
                        0 & \quad\mbox{otherwise}.
                      \end{array} \right. \]
Finally, according to (\ref{i1}) and (\ref{int_c}), there is
\begin{equation}\label{iint}
  \begin{split}
     \int\limits_{a}^{b}\int\limits_{c}^{d} s_{kl}(x,y)\, \mbox{d}y \, \mbox{d}x \, & = \, \int\limits_{a}^{b} s_k(x) \, \mbox{d}x \, = \, \left[ s_{k+1}(x) \right]_a^b \, = \\
     & \, = s_{k+1}(b)-s_{k+1}(a) \, = \, w_g - w_{-k-1},
  \end{split}
\end{equation}
where
$s_{k+1}(x) \, = \, \sum\limits_{i=-k-1}^{g} w_i \, B_i^{k+2}(x)$ and
\begin{equation}\label{vztah_w_v}
  w_i=w_{i-1}+\dfrac{v_i}{d_i}, \quad \mbox{with}\quad d_i=\dfrac{k+1}{\lambda_{i+k+1}-\lambda_i} \quad \forall i=-k,\cdots,g,
\end{equation}
i.e.
\begin{equation}\label{w}
  w_g \, = \, \dfrac{v_g}{d_g}+\cdots +\dfrac{v_{-k}}{d_{-k}} + w_{-k-1}.
\end{equation}
As a direct consequence the following equivalences can be formulated
$$
\int\limits_{a}^{b}\int\limits_{c}^{d} s_{kl}(x,y)\, \mbox{d}y \, \mbox{d}x \, = \, 0 \quad \Leftrightarrow \quad
w_g \, = \, w_{-k-1} \quad \Leftrightarrow \quad \dfrac{v_g}{d_g}+\cdots +\dfrac{v_{-k}}{d_{-k}} \, = \, 0.
$$
By using (\ref{def_v}) and (\ref{u}),
\begin{equation*}
  \begin{split}
  \dfrac{v_g}{d_g}+\cdots +\dfrac{v_{-k}}{d_{-k}} \, & = \, \sum\limits_{i=-k}^{g} \dfrac{u_{ih}-u_{i,-l-1}}{d_i} \, = \\
  & \, = \sum\limits_{i=-k}^{g} \dfrac{1}{d_i}\left( \dfrac{b_{ih}}{t_h}+\cdots +\dfrac{b_{i,-l}}{t_{-l}} \right) \, = \,
\sum\limits_{i=-k}^{g} \, \sum\limits_{j=-l}^{h} \, \dfrac{b_{ij}}{d_i \, t_j},
\end{split}
\end{equation*}
and altogether
$$
\int\limits_{a}^{b}\int\limits_{c}^{d} s_{kl}(x,y)\, \mbox{d}y \, \mbox{d}x \, = \, 0 \quad \Leftrightarrow \quad
\sum\limits_{i=-k}^{g}\sum\limits_{j=-l}^{h} b_{ij} \left(\lambda_{i+k+1}-\lambda_i\right) \left(\mu_{j+k+1}-\mu_j\right) \, = \, 0.
$$
\end{proof}

\begin{proof}[Proof of Theorem \ref{margTh}]
Let $s_{kl}(x,y) \in {\cal S}_{kl}^{\Delta\lambda,\Delta\mu}(\Omega)$, with the given representation
$s_{kl}\left(x,y\right)=\sum\limits_{i=-k}^{g} \sum\limits_{j=-l}^{h} b_{ij} \, B_{i}^{k+1} \left( x\right)\,  B_j^{l+1} \left(y\right)$, and let
$\iint\limits_{\Omega} s_{kl}(x,y) \, \mbox{d}x \, \mbox{d}y \, = \, 0$.
Then from Theorem \ref{splineR2} it is
\begin{equation}\label{zero}
  \sum\limits_{i=-k}^{g}\sum\limits_{j=-l}^{h} b_{ij} \left(\lambda_{i+k+1}-\lambda_i\right) \left(\mu_{j+k+1}-\mu_j\right) \, = \, 0.
\end{equation}
By using (\ref{i1}), (\ref{def_v}) from the proof of Theorem \ref{splineR2} it is obtained that
$s_k(x)\, = \, \sum\limits_{i=-k}^g v_i \, B_{i}^{k+1}(x)$, where $v_i \, = \, u_{ih}-u_{i,-l-1}$.
According to (\ref{u}) it holds
\begin{equation}\label{vztah_v_b}
  v_i \, = \, \dfrac{b_{ih}}{t_h}+\cdots+\dfrac{b_{i,-l}}{t_{-l}}.
\end{equation}
Next, by considering (\ref{iint}),
$$
\int\limits_a^b s_k(x) \, \mbox{d}x \, = \, \left[s_{k+1}(x)\right]_a^b \, = \, w_g - w_{-k-1},
$$
where $s_{k+1}(x)= \sum\limits_{i=-k-1}^g w_i \, B_i^{k+2}(x)$.
But with respect to (\ref{vztah_w_v}), (\ref{w}), (\ref{vztah_v_b}) and (\ref{zero}) this difference equals to
\begin{equation*}
  \begin{split}
    w_g - w_{-k-1} & = \, \dfrac{v_g}{d_g} + \cdots + \dfrac{v_{-k}}{d_{-k}} \, = \, \sum\limits_{i=-k}^{g} \dfrac{v_i}{d_i} \, = \,
    \sum\limits_{i=-k}^{g} \dfrac{1}{d_i} \sum\limits_{j=-l}^{h} \dfrac{b_{ij}}{t_j} \, = \\
    & = \, \sum\limits_{i=-k}^{g} \sum\limits_{j=-l}^{h} \dfrac{b_{ij}\left( \lambda_{i+k+1} -\lambda_i \right)
    \left( \mu_{j+l+1}- \mu_j\right)}{(k+1)(l+1)} \, = 0,
  \end{split}
\end{equation*}
and consequently also
$\int\limits_{a}^{b} s_{k}(x) \mbox{d}x  \, = \, 0$.
The second statement can be proven analogously.
\end{proof}

\begin{proof}[Proof of Theorem \ref{sp_ind}]
Every bivariate spline $s_{kl}(x,y) \in {\cal S}_{kl}^{\Delta\lambda,\Delta\mu}(\Omega)$ can be expressed as
$$
s_{kl}(x,y) \, = \, \sum\limits_{i=-k}^g \sum\limits_{j=-l}^h b_{ij} B_{i}^{k+1}(x) B_{j}^{l+1}(y) \, = \, \sum\limits_{i=-k}^g c_{i} B_{i}^{k+1}(x),
$$
where
$c_i \, = \, \sum\limits_{j=-l}^h b_{ij} B_{j}^{l+1}(y)$.
For given univariate spline
$s_k(x)=\sum\limits_{i=-k}^g v_{i} B_{i}^{k+1}(x)$
we can define coefficients
$$v_{ij} \, := \, v_i, \; \forall j=-l,\ldots,h.$$
Then $s_k(x)$ can be expressed as a bivariate spline which is constant in variable $y$ and which uses $B$-spline bases functions $B_{j}^{l+1}(y)$ in the form
$$
s_k(x) \, = \, \sum\limits_{i=-k}^g \sum\limits_{j=-l}^h v_{ij} B_{i}^{k+1}(x) B_{j}^{l+1}(y),
$$
since with respect to the properties of $B$-splines,
\cite{deboor78,dierckx93,Schum}, we have
$$
\sum\limits_{j=-l}^h v_{ij} B_{j}^{l+1}(y) \, = \, v_i \sum\limits_{j=-l}^h
B_{j}^{l+1}(y) \, = \, v_i \cdot 1 \, = \, v_i.
$$
The rest of proof is obvious with respect to the addition or subtraction of two splines.
\end{proof}

\medskip


\section*{Supplement C: Algorithm}


Theorem \ref{splineR2} enables to formulate an algorithm for finding a bivariate tensor spline $s_{kl}(x,y) \in {\cal S}_{kl}^{\Delta\lambda,\Delta\mu}(\Omega)$ with zero integral over $\Omega$. This task is equivalent to finding the matrix $\mathbf{B} = \left(b_{ij}\right)$, $i=-k,\cdots,g$, $j=-l,\cdots,h$ of the $B$-spline coefficients:\\
{\bf 1.} choose $(g+k+1)(h+l+1)-1$ arbitrary $B$-spline coefficients $b_{ij}\in\mathbb{R}$, for $i=-k\ldots,\beta -1,\beta +1,\ldots,g$ and
 $j=-l\ldots,\gamma -1,\gamma +1,\ldots,h$,\\
{\bf 2.}  compute
$$
b_{\beta\gamma} \; = \; \dfrac{-1}{\left(\lambda_{\beta+k+1}-\lambda_\beta\right)\left(\mu_{\gamma+l+1}-\mu_\gamma \right)} \;
\sum\limits_{i=-k\atop i\neq \beta}^{g} \sum\limits_{j=-l\atop j\neq \gamma}^{h} \; b_{ij} \left(\lambda_{i+k+1}-\lambda_i\right)\left(\mu_{j+l+1}-\mu_j\right).
$$

\medskip


\section*{Supplement D: Complete set of anthropometric data}


\begin{figure}[ht]
  \centering
  \includegraphics[width=15cm]{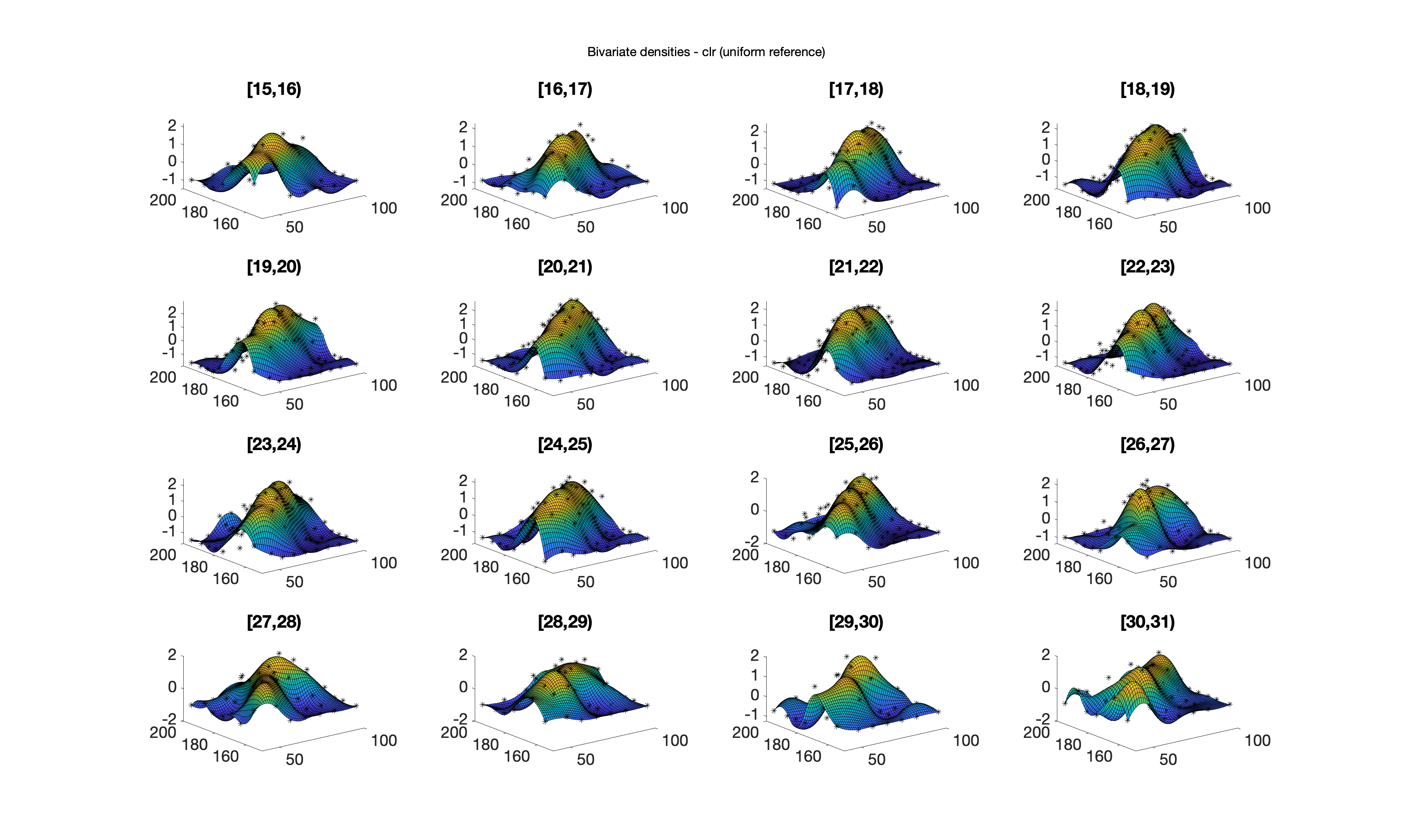}\\
  \caption{Anthropometric data: smoothed clr transformed densities for all age intervals together with data points resulting from he discrete clr transformation at mid-points of histogram classes. The choice of the scale of the reference measure (uniform measure) does not play any role here.}
\end{figure}

\begin{figure}[ht]
  \centering
  \includegraphics[width=15cm]{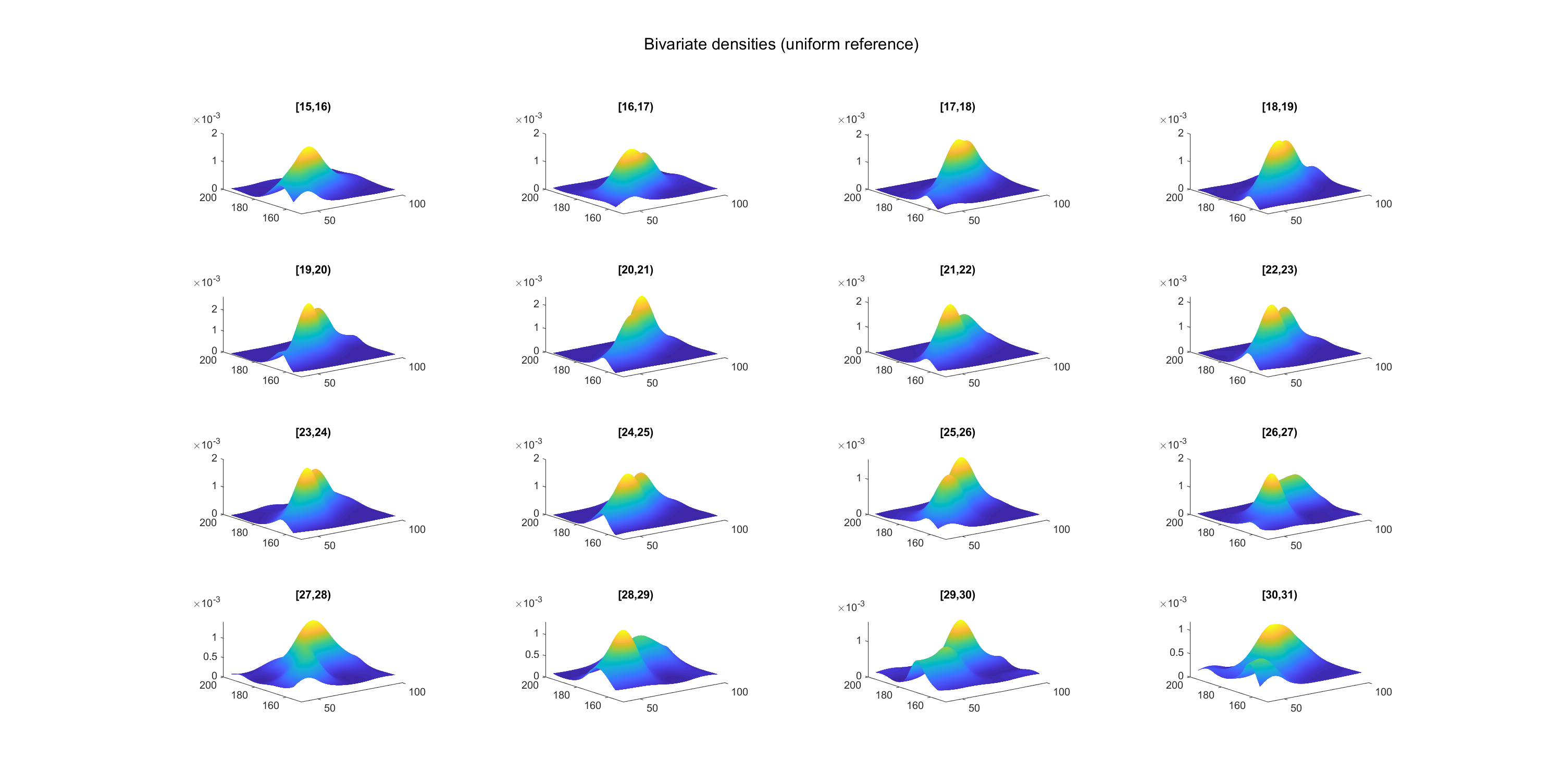}
  \caption{Anthropometric data: smoothed original bivariate densities for all age intervals.}
\end{figure}

\begin{figure}[ht]
  \centering
  \includegraphics[width=15cm]{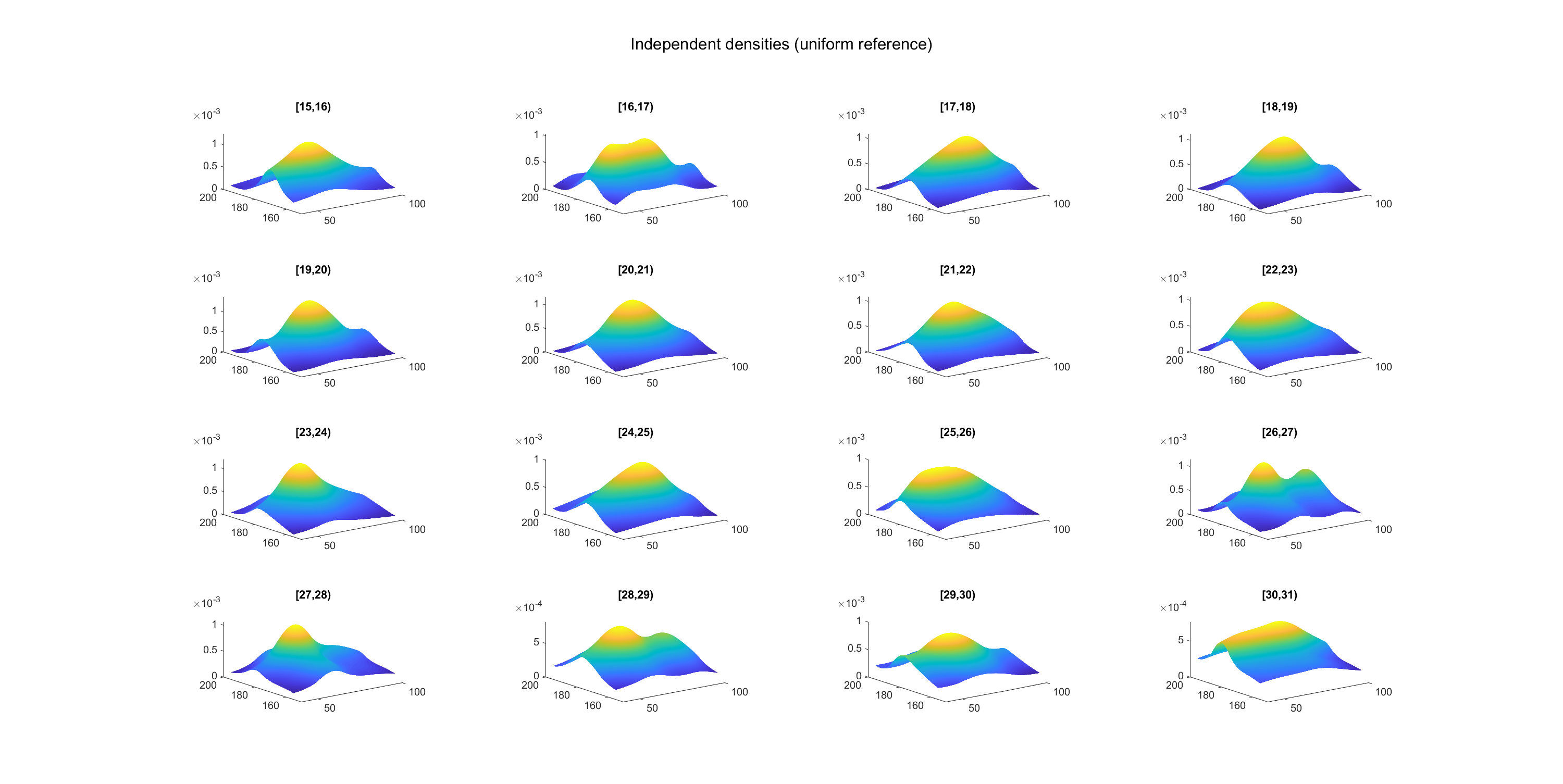}
  \caption{Anthropometric data: smoothed independent densities for all age intervals.}
\end{figure}

\begin{figure}[ht]
  \centering
  \includegraphics[width=15cm]{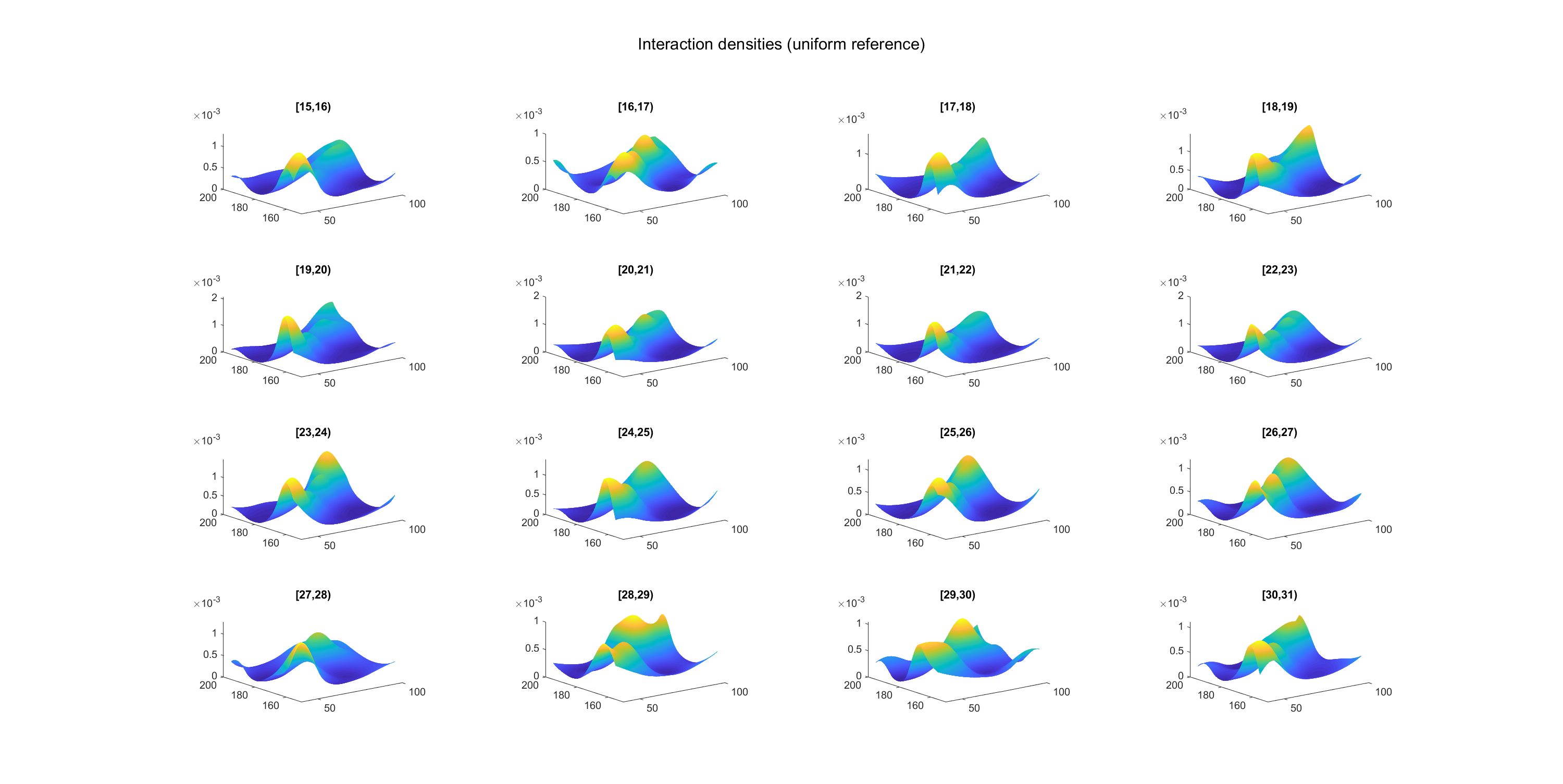}\\
  \caption{Anthropometric data: smoothed interaction densities for all age intervals.}
\end{figure}

\end{document}